\def\SG{\mathcal S}
\newcommand{\ds}[1]{\ {#1} \ }
\newtheorem{theorem}{Theorem}[section]
\newtheorem{proposition}[theorem]{Proposition}
\newtheorem{definition}[theorem]{Definition}
\newtheorem{lemma}[theorem]{Lemma}
\newtheorem{corollary}[theorem]{Corollary}
\newtheorem{example}[theorem]{Example}
\newtheorem{problem}[theorem]{Problem}
\newcommand {\junk}[1]{}
\def\iff{\Longleftrightarrow}
\def\Im{\operatorname{Im}}
\def\({\left(}
\def\){\right)}
\def\implies{\Rightarrow}
\def\al{\alpha}
\def\nxn{n\times n}
\def\w{k}
\def\limw{\lim_{\w}}
\def\minf{-\infty}
\def\cF{\mathcal F}
\def\cG{\mathcal G}
\def\SG{\mathcal S}
\def\gen{\mathcal I}
\def\T{\mathbb T}
\def\P{\mathbb P}
\def\N{\mathbb N}
\def\R{\mathbb R}
\def\Gc{{\mathcal G}_c}
\def\G{\mathcal G}
\def\Tnn{\T^{\nxn}}
\def\eref#1{(\ref{#1})}
\def\proj#1{\pi \langle #1 \rangle}
\def\closproj#1{\overline{\pi\langle #1\rangle}}
\def\per{\operatorname{per}}
\def\cyc{\operatorname{cyc}}
\def\diag{\operatorname{diag}}
\def\det{\operatorname{det}}
\def\rk{\operatorname{rk}}
\def\urk{\operatorname{urk}}
\def\tr{\operatorname{tr}}
\def\sh{\operatorname{S/B}}
\def\GMr{\operatorname{G-M; rw}}
\def\GMc{\operatorname{G-M; cl}}
\def\sym{\operatorname{sym}}
\def\kpr{\operatorname{Kp}}
\def\row{\operatorname{rw}}
\def\clm{\operatorname{cl}}
\def\trrk{\rk_{\tr}}
\def\shrk{\rk_{\sh}}
\def\gmrrk{\rk_{\GMr}}
\def\gmcrk{\rk_{\GMc}}
\def\symrk{\rk_{\sym}}
\def\kprk{\rk_{\kpr}}
\def\rwrk{\rk_{\row}}
\def\clrk{\rk_{\clm}}
\def\fcell{\operatorname{\mathcal F}}
\def\minf{-\infty}
\def\bfx{{\bf x}}
\def\bfy{{\bf y}}
\def\bfu{{\bf u}}
\def\bfv{{\bf v}}
\def\tlA{\widehat A}
\def\ie{\textit{i.e.}, }
\begin{document}
\title[The ultimate rank of tropical matrices] {The
  ultimate rank of tropical matrices}

\author{Pierre Guillon}
 \address{CNRS et  Institut de Math\'{e}matiques de Luminy, IML  Case 907, Campus de Luminy, 13009 Marseille, France.}
 \email{pguillon@math.cnrs.fr}

\author{Zur Izhakian}
\address{School of Mathematical Sciences, Tel Aviv
     University, Ramat Aviv,  Tel Aviv 69978, Israel.}
\email{zzur@math.biu.ac.il}

\author{\\ Jean Mairesse}
 \address{CNRS, UMR 7089, LIAFA, Univ Paris Diderot, Sorbonne Paris Cit\'e, F-75205 Paris, France.}
\email{mairesse@liafa.univ-paris-diderot.fr}

\author{Glenn Merlet}
 \address{Universit\'ee d'Aix-Marseille,
IML, Case 907,
Campus de Luminy, 13009 Marseille, France}
 \email{glenn.merlet@univ-amu.fr}

\subjclass[2010]{Primary: 15A80; 15A03; 14T05. Secondary: 15A15.}

\date{\today}
\thanks{\noindent \underline{\hskip 3cm } \\ File name: \jobname}


\keywords{Max-plus algebra, tropical semiring, tropical matrices,
  tropical rank, other ranks}


\begin{abstract}
A tropical matrix is a matrix defined over the  max-plus semiring.
For such matrices, there exist several non-coinciding notions of rank:
the row rank, the column rank, the Schein/Barvinok rank, the Kapranov
rank, or the tropical rank, among others. In the present paper, we show that there exists a
natural notion of ultimate rank for the powers of a tropical matrix,
which does not depend on the underlying notion of rank.  Furthermore, we provide a simple
formula for the ultimate rank of a matrix which can therefore be computed
in polynomial time.
Then we turn our attention
to finitely generated semigroups  of matrices, for which our notion of
ultimate rank is generalized naturally. We provide both combinatorial and
geometric characterizations of semigroups having maximal
ultimate rank.
As a byproduct, we obtain a polynomial algorithm to decide if the
ultimate rank of a finitely generated semigroup is maximal.
\end{abstract}

\maketitle


{\small \tableofcontents}


\section{{\bf Introduction}}
\numberwithin{equation}{section}

 Tropical matrices are matrices defined over the so-called
 ``max-plus'' or ``tropical'' semiring, that is,
 $(\R\cup\{-\infty\},\max, +)$. Tropical algebra is a
 rapidly growing area and surveys with different viewpoints are available,
 e.g. \cite{BCOQ,Butkovic,IMSh}. 

The tropical semiring is a ``weak'' algebraic structure in which inverses do not
 exist for the $\max$ operation. Tropical matrices essentially correspond
 to weighted digraphs; as
 such, the interplay between algebra and graph theory,
 and combinatorial ideas, often have helped to bypass the lack of basic
 algebraic methods in the tropical framework.

Yet, negation is still absent and  the elementary process of Gauss
elimination is not feasible over the tropical  setting; as a
consequence, familiar algebraic concepts, such as dependence and
spanning, basically do not agree here. This drawback has led to
various notions of matrix ranks (the row rank, the column rank, the
Schein/Barvinok rank, the Kapranov
rank, or the tropical rank, among others,
cf. Definition~\ref{de:ranks}) which, as one would expect,
essentially do not coincide.  Over the
years, much effort has been invested in the study of the different
types of ranks and the relations among them, see for instance \cite{AGGu,DSSt}.
However the use of these ranks for applications, especially for computation, is rather cumbrous.

In the present paper we introduce a canonical notion, termed ultimate
rank, that in a sense unifies the known ranks of tropical matrices.
The ultimate rank of a matrix is defined as the minimal rank over the
closure of the semigroup generated by the matrix, and the minimum is
proven not to depend on the underlying notion of rank.
The ultimate rank of the matrix $A$ depends only on the so-called
critical graph (see Definition~\ref{de:c(a)}), with the following simple
and explicit  formula (Theorem~\ref{th:OneGenerator}): 
\begin{equation*}
\urk(A) = \sum_{C\in  {\mathfrak C}} \cyc(C) \:,
\end{equation*}
where
${\mathfrak C}$ is the set of strongly connected components of the
critical graph of $A$, and where
$\cyc(C)$ is the cyclicity of $C$.
Therefore, unlike some of the known ranks,  the ultimate rank  can be
computed in polynomial  time-complexity, $O(n^3)$ for an $n \times n$  matrix (Corollary~ \ref{co:pol1}).
The proof of the formula relies on the \emph{ultimate expansion} of
tropical matrices \cite{SeSc}.

The next step is to
generalize the notion of ultimate rank to a finitely generated semigroup of
tropical matrices, defined as the minimal rank over all the matrices
in the closure of the semigroup. Determining the ultimate rank of a given
matrix semigroup is then
the obvious question. Unfortunately, the algorithmic computability of
the ultimate rank is an open question. We settle the case of maximal
ultimate rank. 
Indeed, we provide two different characterizations,
one combinatorial and  one geometric, for finitely generated matrix semigroups
of maximal ultimate rank. 
The combinatorial characterization (Theorem~\ref{Thm:Comb}) is
provided  in terms of the structure of the associated graphs of the
generators.
The geometric characterization (Theorem~\ref{th:geom}) is the existence of a common (tropical) eigenvector for all the
generators which lies in their fundamental cells.
Having these characterizations, we show that the problem of
determining whether a semigroup of $n \times n$
matrices has maximal ultimate rank  is decidable in time complexity $O(|\gen|
n^3)$, where $\gen$ is the set of generators.


A by-product of the present paper is to provide new insight on matrices and
semigroups of matrices with
maximal tropical rank. 
Matrices with maximal tropical rank are also
known as {\em non-singular} or {\em strongly regular} matrices. They
have been extensively studied in the literature, see for instance
\cite{AGGu,BuBu,butk94,butk00,BuHe,DSSt,I,IR1,IR2,KiRo05,merl10,RSTh}. 
They play a  key role in solving linear systems of tropical equations, in
studying the optimal assignment problem, or in the theory
of discrete event systems.

\newpage

\section{{\bf Preliminaries}}

The {\em max-plus semiring} or {\em tropical semiring} $(\T,\vee,+)$ is the
set $\T=\R \cup \{ \minf \}$ equipped with the binary operations $(x,y)
\mapsto \max(x,y)=x\vee y$ and  $(x,y)
\mapsto x+y$. In this structure, the ``additive'' operation is the
maximum while the ``multiplicative'' operation is the usual sum; their
respective identity elements are $-\infty$ and~$0$.

Classical algebraic objects, such as vectors and matrices,  have max-plus analogues.
The set  $\T^n$ of
$n$-dimensional {\em vectors} is a semimodule over the semiring
$\T$.
The set  $\T^{n\times m}$ of $n \times m$ {\em matrices} is a
semimodule over $\T$.
A vector  $\bfv$ having entries $v_i\in \T$, resp. a matrix $A$ having entries
$A_{ij} \in \T$, is written as $\bfv = (v_i)$, resp. $A =
(A_{ij})$.  The matrix with only $-\infty$ entries, written $(-\infty)$ is the \emph{null} matrix (of a
given size).

The product of matrices is defined according to the semiring structure
and is denoted by using the special symbol $\odot$\;, that is, for matrices $A,B$ of
compatible sizes, $A\odot B$ is defined by
\[
(A\odot B )_{ij} = \bigvee_{k} \bigl(A_{ik}+ B_{kj} \bigr)\:.
\]
The pair
$(\T^{\nxn},\odot)$ forms a monoid.

It is convenient to use the following notations. As usual,
we write $AB$ for the product $A
\odot B$. For matrices $A, B$ of the same size, we define $A\vee B$ by $(A\vee B)_{ij}=A_{ij}\vee
  B_{ij}$ and write $A \geq B$ if $A_{ij} \geq B_{ij}$ for every $i,j$ (that is  $A \vee B = A$).   The product $\lambda\odot A$ of a matrix $A$ and a scalar $\lambda\in\T$ is  defined by
$(\lambda\odot A)_{ij}= \lambda + A_{ij}$.


\begin{definition}\label{de:tropequiv}
Two matrices $A$ and $B$ of the same size are said to be {\em (tropically)
  equivalent} if there exists
 $\lambda$ in $\R$ such that $B=\lambda\odot A$.
Let $\P\T^{n\times m}$ be the {\em (tropical) projective} matrix set obtained
 by identifying
  equivalent matrices in $\T^{n\times m}$.
The quotient map
 is denoted by $\pi: \T^{n\times m} \rightarrow \P\T^{n\times
   m}$.
\end{definition}

Graph theory provides an important tool in the study of tropical
matrices, established via the following correspondence.

\begin{definition}\label{de:g(a)}
The {\em graph} of a matrix  $A \in \T^{\nxn}$, denoted by ${\cG}(A)$,
is the weighted directed graph with nodes  $\{1,\dots ,
n\}$ and arcs  $(i,j)$ whenever  $A_{ij}\neq -\infty$. The {\em
  weight} of arc $(i, j)$ is $A_{ij}$.
\end{definition}

Powers of a max-plus matrix $A$ can be interpreted
combinatorially in terms of directed paths in
${\cG}(A)$. Let a {\em walk} in ${\cG}(A)$ be a finite sequence of nodes
$p=(i_0,i_1,\cdots, i_{\ell})$ such that $(i_k,i_{k+1})$ is an arc of~${\cG}(A)$ for any~$k$.
Its {\em  length} is $\ell$ and its {\em weight}
is~$0$ if $\ell=0$ and $A_{i_0i_1} + \cdots + A_{i_{\ell-1}i_{\ell}}$ otherwise.
For $\w\in \N$, the
entry $(A^{\w})_{ij}$ of $A^\w$ is equal to the maximal weight of a walk of
length $\w$ from $i$ to $j$  in
${\cG}(A)$.

In this paper, we consider only directed graphs, and use the
classical terminology of graph theory.
A walk from $i$ to itself is called a {\em circuit}, and a circuit of length $1$ is called a {\em loop}.
Moreover, a circuit is called {\em simple} if it contains each
node at most once, except for the first and last one that are the
same.
A graph is {\em strongly connected} if there is a walk from each vertex in the graph to every other vertex.
A {\em strongly
  connected component} (written {\em scc}, for short) of a graph is a subgraph which
is strongly connected and maximal with respect to inclusion.
A scc is {\em trivial} if it consists of one node with no loop.
A graph is {\em completely reducible} if it is a disjoint union of scc.
The {\em cyclicity} of a strongly connected graph is the gcd of the lengths of its circuits.
For a completely reducible graph, the {\em cyclicity} is the lcm of the cyclicities of the scc.

A matrix $A$ is called {\em irreducible} if its associated graph
${\cG}(A)$ is strongly connected; otherwise $A$ is called {\em reducible}.

\section{{\bf The different ranks of a matrix}}

There exist several relevant notions of rank for matrices over the tropical semi\-ring, see
\cite{DSSt} and \cite{AGGu} for extensive accounts. In
Definition~\ref{de:ranks} below, we recall the main ones.
To this aim, we first review some necessary concepts.


\begin{definition} $ $
\begin{enumerate}
 \item The {\em (tropical) permanent} of a matrix $A\in \T^{\nxn}$ is
defined by:
  \begin{equation}\label{eq:tropicalDet}
 \per(A) = \bigvee_{\sigma \in \mathfrak{S}_n}    A_{1\sigma(1)} +
 \cdots +  A_{n\sigma(n)},
\end{equation}
where $\mathfrak{S}_n$ is the set of all the permutations on $\{1,\dots,
  n\}$.
 \item A matrix $A \in \T^{\nxn}$ is called \emph{(tropically) singular}
if $A=(-\infty)\in \T^{1\times 1}$ or if there exist at least two different permutations that reach
the maximum in $\per(A)$, that is,
$$ \per(A)  =  \sum_i A_{i\sigma(i)} = \sum_i A_{i\tau(i)}
 \:,$$
for some  $\sigma \neq \tau$ in $\mathfrak{S}_n$.
Otherwise $A$ is called \emph{(tropically) non-singular}.

If $A$ is non-singular, we denote by $\tau_A$ the unique permutation that
reaches the maximum in (\ref{eq:tropicalDet}).
\end{enumerate}
\end{definition}

Non-singularity is equivalent to \emph{strong regularity} in the sense of~\cite{butk94}. In the sequel, we will use the following result.

\begin{proposition}\label{pr:merlet} Given  $A, B
  \in \T^{\nxn}$ if $AB$ is non-singular
  then $A$ and $B$ are non-singular, and:
\begin{equation*}
{\tau_{AB}} = \tau_B \circ \tau_A, \qquad
\per(AB) = \per(A) + \per(B) \:.
\end{equation*}
\end{proposition}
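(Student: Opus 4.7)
The plan is to unwind $\per(AB)$ as a maximum over an enlarged index set and compare it with $\per(A)+\per(B)$. Substituting $(AB)_{i\sigma(i)} = \bigvee_{k}\bigl(A_{ik}+B_{k\sigma(i)}\bigr)$ into the definition of the permanent and exchanging the sum over $i$ with the inner maxima, one obtains
\begin{equation*}
\per(AB) \;=\; \bigvee_{\sigma\in\mathfrak{S}_n,\ f\colon\{1,\dots,n\}\to\{1,\dots,n\}}\ \sum_{i=1}^n \bigl(A_{if(i)} + B_{f(i)\sigma(i)}\bigr).
\end{equation*}
When $f$ is restricted to be a permutation, the reindexing $(\pi,\tau) := (f,\,\sigma\circ f^{-1})$ is a bijection of $\mathfrak{S}_n\times\mathfrak{S}_n$, and the summand splits as $\sum_i A_{i\pi(i)} + \sum_j B_{j\tau(j)}$. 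Thus the supremum taken over such restricted pairs is exactly $\per(A)+\per(B)$, which yields the easy inequality $\per(AB)\ge\per(A)+\per(B)$ for free.

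The heart of the proof, and the step I expect to be the main obstacle, is the reverse inequality. I will argue its contrapositive: if some maximizing pair $(\sigma_0,f_0)$ above has $f_0$ \emph{not} a permutation, then $AB$ is singular. Non-injectivity of $f_0$ yields indices $i_1\neq i_2$ with $f_0(i_1)=f_0(i_2)=k$, and the plan is to exhibit a second maximizer for $\per(AB)$ by transposing $\sigma_0$, namely $\sigma_1:=\sigma_0\circ(i_1\,i_2)$. Using $l=k$ as a witness inside the inner maxima defining $(AB)_{i_1\sigma_1(i_1)}$ and $(AB)_{i_2\sigma_1(i_2)}$, the joint contribution of the indices $i_1,i_2$ to $\sum_i(AB)_{i\sigma_1(i)}$ is bounded below by $A_{i_1k}+B_{k\sigma_0(i_2)}+A_{i_2k}+B_{k\sigma_0(i_1)}$, which is merely a rearrangement of the corresponding contributions in $\sum_i(AB)_{i\sigma_0(i)}$. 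Hence $\sum_i(AB)_{i\sigma_1(i)} \ge \per(AB)$, and since $\sigma_1\neq\sigma_0$, this produces two distinct permutations attaining $\per(AB)$, i.e.\ $AB$ is singular.

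Combining the two directions gives $\per(AB)=\per(A)+\per(B)$ whenever $AB$ is non-singular; in particular $\per(A),\per(B)>-\infty$, so the maxima defining $\per(A)$ and $\per(B)$ are attained. The non-singularity of $A$ and $B$ and the identity $\tau_{AB}=\tau_B\circ\tau_A$ are then a short uniqueness exercise. If $A$ admitted distinct maximizers $\pi_1\neq\pi_2$ of its permanent, then choosing any $\tau$ attaining $\per(B)$, the two permutations $\sigma_j:=\tau\circ\pi_j$ would both attain $\per(AB)$ via the lower bound from the first step, contradicting non-singularity of $AB$. The symmetric argument gives non-singularity of $B$. Finally, the unique maximizing pair $(\sigma_0,\pi_0)$ with $\pi_0\in\mathfrak{S}_n$ must satisfy $\pi_0=\tau_A$ and $\sigma_0\circ\pi_0^{-1}=\tau_B$, which yields $\tau_{AB}=\sigma_0=\tau_B\circ\tau_A$ as required.
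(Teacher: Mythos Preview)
Your argument is correct. The paper does not actually prove this proposition; it simply cites \cite[Proposition 3.4]{merl10} and moves on. So there is no in-paper proof to compare against, and your self-contained combinatorial argument fills that gap cleanly.

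For what it is worth, your approach is the natural one and is essentially the standard proof: expand $\per(AB)$ over pairs $(\sigma,f)$, observe that permutation $f$'s give exactly $\per(A)+\per(B)$, and show that a non-injective maximizing $f_0$ forces a second optimal permutation via the transposition $(i_1\,i_2)$. One small point worth making explicit in a final write-up: when you bound $\sum_i (AB)_{i\sigma_1(i)}$ from below, you are implicitly using that $(AB)_{i\sigma_0(i)} = A_{if_0(i)}+B_{f_0(i)\sigma_0(i)}$ termwise (not just in sum), which follows from the chain $\per(AB)\ge\sum_i(AB)_{i\sigma_0(i)}\ge\sum_i(A_{if_0(i)}+B_{f_0(i)\sigma_0(i)})=\per(AB)$ together with the termwise inequality. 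You allude to this but it deserves one sentence. Otherwise the deductions of non-singularity of $A$, $B$ and of $\tau_{AB}=\tau_B\circ\tau_A$ are fine.
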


This is proven in \cite[Proposition 3.4]{merl10}. 

\begin{definition}
A family of vectors $\bfx_1, \dots, \bfx_r \in
    \T^{n}$ is {\em linearly independent} (in the Gondran-Minoux sense)
    if for all disjoint $I,J \subset \{ 1, \dots, r\}$, and for all
    $\al_i \in
    \T, i\in I\cup J$, with $(\al_i)_i\neq (-\infty,\dots, -\infty)$, we have $\bigvee_{i\in I}
    (\al_i\odot
    \bfx_i ) \neq \bigvee_{ j\in J} (  \al_j\odot
    \bfx_j ) $.
\end{definition}

\begin{definition} $ $
\begin{enumerate}
 \item The  set $X \subset \T^n$ is {\em tropically convex} if: $\forall \bfu, \bfv \in X, \ \forall \lambda, \mu \in \T$,
\begin{equation}\label{eq:convex}
(\lambda\odot \bfu ) \vee
(\mu\odot \bfv ) = \bigl( (\lambda,\dots , \lambda) + \bfu \bigr) \vee \bigl(
(\mu, \dots, \mu) + \bfv \bigr) \ \in \  X \:.
\end{equation}
 \item The tropically convex set generated by a finite family $\gen$ of vectors
of $\T^n$, that is the set
\[
\bigg\{ \ \bfy \in \T^{n} \ \mid \
\exists (\al_{{\bf s}})_{{\bf s} \in \gen}, \al_{{\bf s}}\in \T, \quad \bfy =
\bigvee_{{\bf s}\in \gen} ( \al_{{\bf s}} \odot {\bf s} ) \ \bigg\}\:,
\]
is called the \emph{(tropical) convex hull} of $\gen$.
\end{enumerate}
\end{definition}

A tropically convex set is invariant by translations along the
direction $(1,\dots, 1)$. Therefore, it can be  identified  with its image
in the projective space $\P\T^n$.
Next result is proven in \cite{wagn91}.

\begin{proposition}
Every finitely generated tropically convex set has a projectively unique generating
set which is minimal for inclusion.
\end{proposition}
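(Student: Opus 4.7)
The plan is to identify a canonical set of ``extreme directions'' inside $X$ which must be represented in every generating set. Call $\bfx \in X$ \emph{extremal} if, whenever $\bfx = \bigvee_i (\alpha_i \odot \bfu_i)$ with $\bfu_i \in X$, at least one $\bfu_i$ is projectively equivalent to $\bfx$ in the sense of Definition~\ref{de:tropequiv}. The argument reduces to three claims: (i) any finite generating set can be pruned to one that is minimal for inclusion; (ii) a minimal generating set contains only extremal vectors; (iii) every extremal vector of $X$ is projectively represented in every generating set of $X$. Projective uniqueness then follows at once, since (ii) and (iii) together say that any minimal generating set consists of exactly one representative from each projective class of extremals.

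For (i) I iteratively discard any ${\bf s}\in\gen$ that lies in the tropical convex hull of $\gen\setminus\{{\bf s}\}$; since $|\gen|$ strictly decreases, the procedure terminates. For (iii) I expand $\bfx = \bigvee_{{\bf s}\in\gen}(\alpha_{{\bf s}}\odot{\bf s})$ and apply extremality to the finite family $({\bf s})_{{\bf s}\in\gen}$, which reduces to the binary case already in the definition by an easy induction on the number of terms (grouping all but one term yields another element of $X$, since $X$ is convex).

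I expect (ii) to be the main obstacle. Assume ${\bf s}\in\gen$ is not extremal, so that ${\bf s} = \bigvee_i (\alpha_i \odot \bfu_i)$ with each $\bfu_i\in X$ not projectively equivalent to ${\bf s}$, and substitute the expansions $\bfu_i = \bigvee_{{\bf t}\in\gen} (\beta_{i,{\bf t}}\odot{\bf t})$ to get ${\bf s} = \bigvee_{i,{\bf t}}((\alpha_i+\beta_{i,{\bf t}})\odot{\bf t})$. The delicate point is to show that the coefficient of the self-term ${\bf t}={\bf s}$, namely $\gamma := \max_i(\alpha_i + \beta_{i,{\bf s}})$, is \emph{strictly} negative. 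It is $\leq 0$ because the self-term is dominated by ${\bf s}$; if equality held at some index $i_0$, then $\alpha_{i_0}\odot\bfu_{i_0}$ would be pinched between ${\bf s}$ from above (as a term in the max) and ${\bf s}$ from below (since $\bfu_{i_0} \geq \beta_{i_0,{\bf s}}\odot{\bf s} = -\alpha_{i_0}\odot{\bf s}$), forcing $\alpha_{i_0}\odot\bfu_{i_0}={\bf s}$ and contradicting non-equivalence of $\bfu_{i_0}$ to ${\bf s}$. Once $\gamma<0$, the self-term is strictly dominated coordinate by coordinate (the $-\infty$ coordinates of ${\bf s}$ being harmless), so it can be omitted, expressing ${\bf s}$ in $\gen\setminus\{{\bf s}\}$ and contradicting inclusion-minimality.
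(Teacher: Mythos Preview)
The paper does not actually prove this proposition: it is stated with the attribution ``Next result is proven in \cite{wagn91}'' and no argument is given. So there is nothing in the paper to compare your proof against beyond the citation.

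That said, your argument is correct and is essentially the classical extreme-point argument that underlies Wagneur's result. The three claims are sound, and your handling of the delicate point in~(ii) is right: from $\gamma\odot{\bf s}\leq{\bf s}$ one gets $\gamma\leq 0$ (using that ${\bf s}$ has at least one finite coordinate, which you may want to make explicit by noting that a minimal generating set never contains the all-$(-\infty)$ vector), and the sandwich $-\alpha_{i_0}\odot{\bf s}\leq \bfu_{i_0}\leq -\alpha_{i_0}\odot{\bf s}$ forces projective equivalence if $\gamma=0$. The only cosmetic remark is that your definition of extremal already allows arbitrary finite families, so the aside about reducing~(iii) to the binary case by induction is unnecessary. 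You might also spell out, for the uniqueness conclusion, why a minimal $\gen$ cannot contain two projectively equivalent elements (immediate from minimality), and why there are only finitely many extremal classes (bounded by $|\gen|$ via~(iii)).
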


\begin{definition}\label{de:topdim}
The {\em weak dimension} of a finitely generated tropically convex
subset $X$ of
$\T^n$ is the cardinality of the minimal generating set of $X.$

The {\em topological dimension} of a subset $X$ of $ \T^n$ is the largest $k$ for which  there exists an affine space $K \subset \R^n$
of dimension $k$, such that $X\cap K$ has a non-empty relative interior in $K$.
\end{definition}

Below, we apply this notion of topological dimension to finitely generated tropically convex
subsets of~$\T^n$, for which it coincides with the other classical notions of dimensions, such as 
the Hausdorff dimension or the Lebesgue covering dimension. 

Consider the  tropically convex set of Figure
\ref{fi-topweak}. The topological dimension is 2 since the set is the
union of three infinite strips. The weak
dimension is 3 since the convex hull of any two finite vectors is at
most the union of two infinite strips. 

In general, for a given $\T^n$, $n\geq 3$, the weak dimension is unbounded whereas
the topological dimension is bounded by $n$.


\medskip

Let us define the main notions of rank for tropical matrices.

\begin{definition}\label{de:ranks}
Consider a matrix $A\in \T^{n\times m}$. 

\begin{description}
    \item[Tropical rank] Let $\trrk(A)$ be the maximal $r$
    for which there exists an $r \times r$ non-singular submatrix of
    $A$.


\item[Symmetrized rank] Define $\det^+(A)$ (resp. $\det^-(A)$) as in \eref{eq:tropicalDet} but with $\sigma$
      ranging over permutations of even (resp. odd) sign.
The symmetrized rank $\symrk$  is the maximal $r$
    such that $A$ has an $r \times r$ submatrix $B$ for which $\det^+(A)\ne
    \det^-(A)$.

    \item[Gondran-Minoux rank] The Gondran-Minoux row rank $\gmrrk(A)$ is the maximal $r$
such that $A$ has $r$ independent rows. The Gondran-Minoux column rank
$\gmcrk(A)$ is defined similarly with respect to the columns.

    \item[Kapranov rank] The Kapranov rank $\kprk(A)$ is defined for
      instance in \cite[Def. 1.2 and 3.2]{DSSt}.


    \item[Schein/Barvinok rank] Let  $\shrk(A)$ be the minimal
      $r$ such that: $\exists B \in \T^{n\times r}, C\in \T^{r\times
        m}$, $A=BC\in\T^{n\times m}$.


    \item[Row rank] Let $\rwrk(A)$ be the weak dimension of the convex hull of the row vectors of $A$.

    \item[Column rank]  Let $\clrk(A)$  be the weak dimension of the convex hull
     of the column vectors of
      $A$.
\end{description}
Two equivalent matrices have the same rank for any of the
above notions. Therefore, the different notions of ranks can be viewed
as being defined on $\P\T^{n\times m}$.
\end{definition}

By convention, all the ranks of the null matrix $(-\infty)$ are
set to 0. For a matrix $A\neq (-\infty)$, we check that
$\rk_{\star}(A)>0$ for any of the above notion of rank.

\medskip

None of the above notions coincide~\cite[Section 8]{AGGu}.
The following relations have been established, see \cite[Theorem
8.6]{AGGu} for \eref{eq:relation1} and \cite[Theorem 1.4]{DSSt} for 
\eref{eq:relation2}:

\begin{equation}\label{eq:relation1}
 \trrk(A) 
  \leq \symrk(A) \leq \left\{ \begin{array}{l}
                          \gmrrk(A)  \\  \gmcrk(A)  \\ \end{array}   \right \}
         \leq \shrk(A) \leq \left\{ \begin{array}{l} \rwrk(A)\\
                                                  \clrk(A)\\
                                                  \end{array}
                                                  \right.
\end{equation}
and  \begin{equation}\label{eq:relation2} \trrk(A) \leq \kprk(A)
         \leq \shrk(A) .
\end{equation}

The extremal values of the ranks are of specific interest.
\begin{description}
\item[Rank 0] Any of the ranks is $0$ iff the matrix is null.
\item[Rank 1] It can easily be checked that $\trrk(A)= 1$ iff $A$ is
  non-null and all the non-null rows are tropically
equivalent. Thus, rank~$1$ occurs (or not) simultaneously for all ranks.

\item[Maximal rank] Consider $A\in \T^{\nxn}$. According to
\eref{eq:relation1} and \eref{eq:relation2}, we have:
$\trrk(A) =n \Rightarrow \rk_{\star}(A)=n$,
for any of the above notions of rank. It corresponds to the case of a
non-singular matrix.
\end{description}


Below, we focus on the extremal ranks given by \eqref{eq:relation1} and \eref{eq:relation2}, that is, the column rank, the
row rank, and the tropical rank.
It turns out that these three notions of rank are related to the dimension of the ``image'' of the
matrix. It follows directly from the definition for the column and row
ranks, and from the next result for the tropical rank.

 \begin{proposition}\label{pr-sturmfels}
 Consider a matrix $A\in \T^{n\times n}$ such that: $\forall i,
 \exists j,  A_{ij}\neq -\infty$. Then $\trrk(A)$
is the topological dimension of the convex hull
     of the column (resp. row) vectors of $A$.
 \end{proposition}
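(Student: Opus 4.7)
The plan is to prove the two inequalities $\trrk(A) \leq \topdim(X)$ and $\topdim(X) \leq \trrk(A)$, where $X \subset \T^n$ denotes the tropical convex hull of the columns $A_{\cdot 1}, \ldots, A_{\cdot n}$ of $A$ (the row case being symmetric).

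For the first inequality, let $r = \trrk(A)$ and fix an $r \times r$ non-singular submatrix $B$ obtained by restricting $A$ to rows $I = \{i_1, \ldots, i_r\}$ and columns $J = \{j_1, \ldots, j_r\}$. The crux is the lemma that the tropical convex hull of the columns of a non-singular $r \times r$ matrix $B$ has topological dimension $r$ in $\T^r$. To prove it, I parameterize the hull by $\bfv(\alpha) = \bigvee_{j} \alpha_j \odot B_{\cdot j}$ and exploit the unique optimal permutation $\tau_B$ attaining $\per(B)$: for $\alpha$ in a small open neighborhood around a suitable reference point, the maxima in each coordinate of $\bfv(\alpha)$ are attained uniquely, so $\alpha \mapsto \bfv(\alpha)$ is, modulo the translation direction $(1,\ldots,1)$, a local homeomorphism onto an $r$-dimensional open region of $\R^r$. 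The hypothesis that every row of $A$ contains a finite entry guarantees that the coordinate projection $\pi_I : \T^n \to \T^r$ restricts to a well-defined, $1$-Lipschitz map on the convex hull $\mathrm{conv}(A_{\cdot j_1}, \ldots, A_{\cdot j_r}) \subset X$ and surjects it onto the convex hull of the columns of $B$. Because Lipschitz maps cannot increase topological dimension, we conclude $\topdim(X) \geq r$.

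For the reverse inequality $\topdim(X) \leq \trrk(A)$, suppose $\topdim(X) = k$, witnessed by a $k$-dimensional affine subspace $K \subset \R^n$ for which $X \cap K$ has non-empty relative interior. Pick a generic point $\bfp$ in this relative interior, together with a representation $\bfp = \bigvee_j \alpha_j^\star \odot A_{\cdot j}$, and record for each row $i$ the set of generators attaining the maximum, $S_i(\bfp) = \{j : p_i = \alpha_j^\star + A_{ij}\}$. Invoking the polyhedral decomposition of the tropical convex hull into type cells (\emph{cf.}\ \cite{DSSt}), the cell of $X$ containing $\bfp$ has dimension equal to the tropical rank of the submatrix of $A$ cut out by the combinatorial data $(S_i)$; since this dimension is $k$, one extracts a $k \times k$ non-singular submatrix of $A$, giving $\trrk(A) \geq k$. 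The principal obstacle is this second inequality, which relies on the combinatorial/polyhedral description of tropical polytopes and of the types of their interior points; the first inequality is essentially the local assertion that a non-singular tropical matrix has full-dimensional image.
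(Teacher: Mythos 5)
Your proof follows essentially the route of the reference that the paper invokes here, namely \cite[Theorem~4.2]{DSSt}: the paper gives no argument beyond that citation (remarking only that the real-entry case ``can be easily adapted''), and your two inequalities---lower bound via a non-singular submatrix, its unique optimal permutation, and a coordinate projection; upper bound via the type/cell decomposition of tropical polytopes---are precisely that argument. One small caution on the lower bound: the assertion that ``Lipschitz maps cannot increase topological dimension'' is not true for arbitrary metric spaces (Lipschitz surjections from totally disconnected compacta onto intervals exist, e.g.\ the difference map on a fat Cantor set); what is true is that Lipschitz maps cannot increase Hausdorff dimension, and that Hausdorff and topological dimension coincide for polyhedral sets such as the tropical convex hull of finitely many vectors, so you should appeal explicitly to that polyhedral structure to close the dimension-transfer step.
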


Proposition \ref{pr-sturmfels} appears in \cite[Theorem 4.2]{DSSt}
where the proof is carried out for  matrices in $\R^{n\times n}$ but
can be easily adapted. See also \cite[Theorems 3.3 and
4.1]{butk00}. 

\medskip

The following examples show that
these 3 notions of rank do not coincide.

\begin{example}\label{ex:1}
Consider the matrices:
\[
A= \left[ \begin{array}{rrr} -1 & 0 & 0 \\ 0 & -1 & 0  \\ 0 & 0 & -1
\end{array}\right], \qquad
B= \left[ \begin{array}{cccc} 2 & 1 & 0 & -1 \\ -2 & 1 & 0 & 1  \\ 0 & 0
    & 0 & 0 \\ 0 & 0 & 0 & 0
\end{array}\right] \:,
\]
for which we have:
\begin{eqnarray*}
\trrk(A)=2, & \rwrk(A)=3, & \clrk(A)=3, \\
\trrk(B)=3, & \rwrk(B)=3, & \clrk(B)=4 \:.
\end{eqnarray*}
Let $\Im(A)$ be the convex hull of the
columns of $A$ (i.e. the image set of the mapping: $\T^3\rightarrow \T^3,
{\bf x} \mapsto A\odot {\bf x}$).
In Figure \ref{fi-topweak}, we have represented the set $\Im(A)$ on the
left, and the projective set $\pi(\Im(A))$ on the right (represented by orthogonal
projection on the plane orthogonal to the direction $(1,1,1)$).

\tikzset{math3dMP/.style={y={(0.4cm,1cm)},z={(-0.1cm,0.7cm)},x={(1cm,0.1cm)}}}

\begin{figure}[ht]
 \caption{The tropically convex set $\Im(A)$ (on the left) and its projective image $\pi(\Im(A))$ (on the right).}
 \label{fi-topweak}

\begin{tikzpicture} [math3dMP,scale=1]



	\coordinate (a) at (-2,-1,-1) ;
	\coordinate (b) at (-1,-2,-1) ;
	\coordinate (c) at (-1,-1,-2) ;
	\coordinate (o) at (0,0,0) ;
	\coordinate (e) at (-4/3,-4/3,-4/3) ;
	\coordinate (d) at (8/3,8/3,8/3) ;
	\coordinate (od) at (4/3,4/3,4/3) ;
	\coordinate (t) at (1,1,1) ;
	\coordinate (t2) at (5/3,5/3,5/3) ;
	\filldraw[fill=gray!50, opacity=0.8, dashed ] (c) -- ++(d) -- (od) -- (e) -- cycle;
	\draw[very thick] (c) -- ++(t) node{$\bullet$} node[right]{} -- ++(t2);
	
	\draw[->,Magenta,>=angle 45] (0,-2.5,0) -- (0,2,0) ;
	
	\filldraw[fill=gray!50, opacity=0.8, dashed ] (a) -- ++(d) -- (od) -- (e)-- cycle;
	\draw[very thick] (a) -- ++(t) node{$\bullet$} node[above left]{} -- ++(t2);
	
	\draw[->,Magenta,>=angle 45] (-1.5,0,0) -- (1.5,0,0);
	\draw[->,Magenta,>=angle 45] (0,0,-1.8) -- (0,0,2.2);
	
	\filldraw[fill=gray!50, opacity=0.8, dashed ] (b) -- ++(d) -- (od) -- (e) -- cycle;
	\draw[very thick] (b) -- ++(t) node{$\bullet$} node[left]{} -- ++(t2);

	\draw[thin] (e) --  ++(d);
	\draw[Magenta] (0,-2.3,0) -- (o);

\end{tikzpicture}~\epsfxsize=160pt \epsfbox{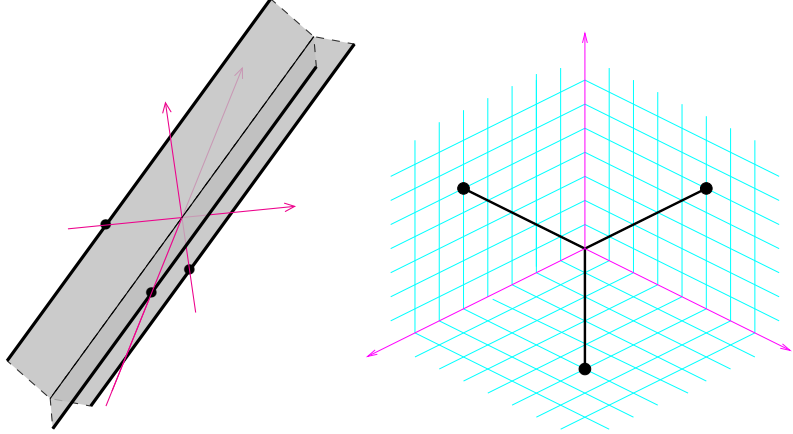} 

 \end{figure}
\end{example}



\medskip

{\em Complexity.} Computing the row, resp. column, rank of a matrix in $\T^{\nxn}$
has time-complexity~$O(n^3)$, see for instance \cite[Chapter
3.4]{Butkovic}.
Computing the tropical rank of a matrix whose entries take only two
possible values is NP-complete, see \cite[Theorem 13]{KiRo05}. On the other hand,
determining if a matrix in $\T^{\nxn}$ has tropical rank $n$
(\ie is non-singular) can be done with an algorithm of
time-complexity $O(n^3)$, see \cite{BuHe}.

\section{{\bf Max-plus spectral theory}}\label{se-spec}

The results in this section are classical, and are to be used in the proofs in
\S \ref{se-ulti}. Observe that the tropical rank, via the definition of the permanent, involves the
maximal total weight of the decompositions of $\G(A)$ into cycles.
Therefore, it should come as no surprise that it connects to the
notions below.

\begin{definition}\label{de:c(a)}
For a matrix $A\in \T^{\nxn}$, define:
\begin{equation}\label{eq:gamma}
\rho(A) = \bigvee_{ j\leq n} \bigvee_{i_1,\dots, i_j } \frac{ A_{i_1 i_2} +
  A_{i_2i_3} +\cdots + A_{i_j i_1}}{j} \:,
\end{equation}
Observe that
$\rho(A)$ is the maximal mean weight of the (simple) circuits of
${\cG}(A)$.

A circuit of ${\cG}(A)$ is called {\em critical} if its mean weight is
$\rho(A)$.
The {\em critical graph} of $A$, denoted by ${\cG}_c(A)$, is the union of
all the critical circuits of ${\cG}(A)$.
\end{definition}

If ${\cG}(A)$ is acyclic then $\rho(A)=-\infty$ and ${\cG}_c(A)$ is
empty.

\begin{definition}\label{de:vpvp}
For $A \in \T^{n \times n}$, if $\lambda\in\T$ and $\bfu\in\T^n
\setminus \{(-\infty)\}$ are such that:
\[
A\odot\bfu=\lambda\odot\bf u \:,
\]
then $\lambda$ is called a \emph{(tropical) eigenvalue} and $\bf u$ is
a \emph{(tropical) eigenvector} associated to $\lambda$.
The set of such eigenvectors is the \emph{(tropical) eigenspace} associated to~$\lambda$.
\end{definition}

Observe that the set of eigenvectors associated with a given eigenvalue is
tropically convex.


The next statement is a version of the celebrated max-plus spectral
theorem.


\begin{theorem}\label{thm:spcThm} Consider a matrix  $A \in
  \T^{\nxn}$. We have:
\begin{enumerate}
    \item\label{i:eigenvalue} $\rho(A)$ is the maximal eigenvalue of
      $A$;

    \item\label{i:weakdim} if $\rho(A)\neq -\infty$, the weak dimension of the eigenspace of $A$ associated to $\rho(A)$ is
      equal to the number of scc of $\Gc(A)$;

\item if $A$ is irreducible, or, more generally, if each scc of
      $\cG(A)$ contains a critical node, then $\rho(A)$ is the unique eigenvalue.

\end{enumerate}
\end{theorem}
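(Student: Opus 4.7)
My plan is to normalize $A$ by rescaling $A \mapsto (-\rho(A))\odot A$, which leaves $\Gc(A)$ unchanged and shifts every eigenvalue by $-\rho(A)$; assume henceforth $\rho(A)=0$. Then every circuit has non-positive mean weight, so walks can be shortened by excising their circuits without losing weight, and hence the Kleene star $A^* := \bigvee_{k\geq 0} A^k$ is well-defined, with each entry equal to the maximum weight of a simple walk between its endpoints. For any critical node $c$, a critical circuit through $c$ of length $\ell$ gives $(A^{\ell})_{cc}=0$, so writing $A^+ := \bigvee_{k\geq 1} A^k$ one has $(A^+)_{cc}=0=(A^*)_{cc}$, whence $A\odot(A^*)_{\cdot c} = (A^+)_{\cdot c} = (A^*)_{\cdot c}$. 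Thus $(A^*)_{\cdot c}$ is a nonzero eigenvector for $0=\rho(A)$. Conversely, given any eigenvalue $\lambda$ with eigenvector $u$ and support $S := \{i : u_i > \minf\}$, for each $i\in S$ the equation $\max_j(A_{ij}+u_j) = \lambda + u_i$ is attained at some $\sigma(i)\in S$; iterating $\sigma$ in the finite set $S$ produces a circuit of length $\ell$ whose arc weights telescope to $\ell\lambda$, so $\lambda \leq \rho(A)$, completing~\eref{i:eigenvalue}.

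For \eref{i:weakdim}, I would first show that $\{(A^*)_{\cdot c} : c \text{ critical}\}$ generates the whole eigenspace: starting from $A^*\odot u = u$ (which follows from $A\odot u = u$) and the observation that for large $k$ the maximizer in $u_i = (A^k u)_i$ must route through a critical circuit, one recovers $u = \bigvee_{c\text{ critical}} u_c \odot (A^*)_{\cdot c}$. Next, $(A^*)_{\cdot c}$ and $(A^*)_{\cdot c'}$ are tropically equivalent iff $c, c'$ lie in a common scc of $\Gc(A)$: the \emph{if} direction uses that a critical round-trip between them has weight $0$, forcing $(A^*)_{cc'}+(A^*)_{c'c}=0$ and hence $(A^*)_{\cdot c'} = (A^*)_{cc'}\odot (A^*)_{\cdot c}$ by comparing the two families of walks; the \emph{only if} direction uses that any such proportionality would require a tight round-trip of weight exactly $0$, which necessarily decomposes into critical circuits and therefore places $c, c'$ in the same scc of $\Gc(A)$. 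Selecting one representative per scc of $\Gc(A)$ then produces a minimal generating family of the expected cardinality.

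For part~(3), the support $S$ of any eigenvector is closed under predecessors in $\cG(A)$: an arc $i\to j$ with $i\notin S$ and $j\in S$ would yield $(A\odot u)_i > \minf$, contradicting $\lambda + u_i = \minf$; moreover each $i\in S$ must have an out-neighbor in $S$. If $A$ is irreducible this forces $S=\{1,\dots,n\}$, and under the weaker hypothesis that every scc of $\cG(A)$ contains a critical node the same analysis ensures $S$ still meets some critical circuit. Summing the inequalities $A_{c_k c_{k+1}} + u_{c_{k+1}} \leq \lambda + u_{c_k}$ around such a critical circuit of length $p$ gives $p\rho(A) \leq p\lambda$, and combined with~\eref{i:eigenvalue} we conclude $\lambda = \rho(A)$. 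The main obstacle throughout is~\eref{i:weakdim}: proving that \emph{every} eigenvector (not just a dense subsemimodule) is a finite tropical combination of critical columns, and ruling out spurious proportionality across distinct sccs of $\Gc(A)$, both require a careful analysis of how walk weights interact with the critical subgraph.
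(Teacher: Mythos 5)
The paper does not prove this theorem; it states it as classical and refers to Baccelli--Cohen--Olsder--Quadrat and Butkovi{\v{c}} for a full proof. Your sketch follows the standard route of those references: normalize to $\rho(A)=0$, construct eigenvectors as columns of the Kleene star $A^*$ indexed by critical nodes, and bound any other eigenvalue by the telescoping circuit-mean argument. Parts (1) and (3) are essentially sound, though part (1) silently assumes $\rho(A)\neq-\infty$ (in the acyclic case $\Gc(A)$ is empty, there is no critical node, and one needs a separate short remark that some column of $A$ is identically $-\infty$ so that $-\infty$ is indeed attained as an eigenvalue).

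There is a genuine gap in part (2), and you partly flag it but misidentify what is missing. After establishing that the columns $(A^*)_{\cdot c}$ for critical $c$ span the eigenspace, you argue that $(A^*)_{\cdot c}$ and $(A^*)_{\cdot c'}$ are tropically proportional iff $c,c'$ lie in the same scc of $\Gc(A)$, and from this you conclude that one representative per scc yields a \emph{minimal} generating family. But in tropical convex geometry pairwise non-proportionality of generators does not give minimality: a vector can lie in the tropical convex hull of others without being proportional to any single one of them, e.g.\ $(0,0,0)=(0,0,-1)\vee(0,-1,0)$. What the weak-dimension claim actually requires is that each chosen column be \emph{extremal}, i.e.\ not a tropical combination of the remaining representatives. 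This is true but needs its own argument: if $(A^*)_{\cdot c}=\bigvee_{c'\neq c}\mu_{c'}\odot(A^*)_{\cdot c'}$, evaluating at coordinate $c$ and using $(A^*)_{cc}=0$ forces $\mu_{c'}+(A^*)_{cc'}=0$ for some maximizing $c'$; evaluating at coordinate $c'$ then gives $(A^*)_{c'c}\geq\mu_{c'}+(A^*)_{c'c'}=-(A^*)_{cc'}$, hence $(A^*)_{cc'}+(A^*)_{c'c}\geq 0$, and your own decomposition-into-critical-circuits argument then puts $c$ and $c'$ in the same scc of $\Gc(A)$, a contradiction. As written, ``ruling out spurious proportionality'' is strictly weaker than this and does not close the proof.
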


A full proof can be found for instance in \cite[Chapter 3]{BCOQ}
or \cite[Chapter 4]{Butkovic}.


\medskip

Recall that a square matrix $A$ is {\em torsion} if: $\exists k,c >0, \
A^{k+c}=A^k$.
The next result completes Theorem \ref{thm:spcThm}.

\begin{theorem}\label{th:nach}
Consider a matrix $A\in\T^{n\times n}$.
There exist $N\geq 0$, a torsion matrix $T$ of $\T^{n\times n}$, and a
sequence $(B_k)_k$ of matrices of $\T^{n\times n}$ with $\lim_k B_k =
(-\infty)$, such that:
\begin{equation}\label{eq-nach}
\forall k\geq N, \qquad A^k= k\rho(A) \odot \bigl( T^k\vee B_k \bigr)\:.
\end{equation}
Furthermore, $(-\rho(A))\odot A$ itself is torsion (that is, we can
get rid of $B_k$ in \eref{eq-nach}) iff each scc of $\G(A)$
contains a critical node. 
\end{theorem}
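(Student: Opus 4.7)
My plan is to normalize $\rho(A) = 0$, show the critical submatrix $A_c$ is torsion, and then decompose each length-$k$ walk underlying $(A^k)_{ij}$ according to whether it visits the critical nodes. Setting $\bar A = (-\rho(A)) \odot A$ gives $\rho(\bar A) = 0$ and $\bar A^k = (-k\rho(A)) \odot A^k$, so it suffices to prove $\bar A^k = T^k \vee B_k$; the full statement then follows upon multiplying by $k\rho(A)$. I therefore assume $\rho(A) = 0$.

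The first step is to show that the restriction $A_c$ of $A$ to the set $V_c$ of critical nodes is torsion. Indeed $\cG(A_c) = \Gc(A)$ is the disjoint union of strongly connected components $C_1, \dots, C_s$ with cyclicities $c_1, \dots, c_s$, and every circuit of $\cG(A_c)$ is critical and therefore has mean $0$. Applying the irreducible version of the cyclicity theorem (cf.\ \cite{BCOQ,Butkovic}) to each $C_i$ yields $A_c^{k+c} = A_c^k$ for $k$ sufficiently large, where $c = \mathrm{lcm}(c_1, \dots, c_s)$.

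Next, I would decompose each length-$k$ walk from $i$ to $j$ according to whether it meets $V_c$. A walk that does meet $V_c$ splits as prefix + middle + suffix, where the prefix goes from $i$ to the first critical visit $u$, the middle from $u$ to the last critical visit $v$, and the suffix from $v$ to $j$. A key sub-lemma is that, since all circuits outside $\Gc(A)$ have mean strictly less than $0$, for $k$ large the maximum-weight middle walk is realized inside $\Gc(A)$ itself. Letting $L_{iu}$ and $R_{vj}$ denote the supremal weights (over all lengths) of entry and exit walks, which are finite because $\rho(A) = 0$, the ``visits-$V_c$'' contribution to $(A^k)_{ij}$ equals $\bigvee_{u,v \in V_c}(L_{iu} + (A_c^{k'})_{uv} + R_{vj})$ for an appropriate length $k'$. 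Invoking the ultimate expansion of Sergeev--Schneider~\cite{SeSc} packages this as $T^k_{ij}$ for a single fixed matrix $T \in \T^{n \times n}$, and $T$ is then torsion because its powers inherit the periodicity of $A_c$. Walks that avoid $V_c$ entirely live in the induced subgraph on $V \setminus V_c$, whose maximum circuit mean is some $-\delta < 0$; a cycle-decomposition/pigeonhole bound shows such walks have weight at most $C - \delta k$ for a constant $C$, so their collective maxima form a matrix $B_k \to (-\infty)$. Combining, $A^k = T^k \vee B_k$ for $k \geq N$.

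For the final \emph{iff} statement: if some scc of $\cG(A)$ that carries a circuit contains no critical node, then the entries of $A^k$ witnessed only by walks trapped in that scc decay to $-\infty$ without ever being identically $-\infty$, contradicting torsion. Conversely, if every scc of $\cG(A)$ contains a critical node, then on every entry $(i,j)$ for which $(A^k)_{ij} > -\infty$ for some $k$, the $T^k$ contribution dominates $B_k$ for $k$ large, so $B_k$ is absorbed and $A^k = T^k$ is torsion. The main obstacle I expect is the precise construction of $T$ as a \emph{single} $n \times n$ matrix whose max-plus powers reproduce the critical contribution exactly rather than merely as a length-parameterized expression; this is precisely where the ultimate-expansion machinery of \cite{SeSc} is essential.
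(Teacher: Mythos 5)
Your proposal is correct and matches the paper's approach: the paper proves this result purely by citation to the ultimate (CSR) expansion of Sergeev and Schneider \cite[Theorem 5.6]{SeSc} for the main decomposition and to the cyclicity theorem (e.g.\ \cite[Lemma 4.4]{gaub96}) for the final iff, and your walk-decomposition sketch unpacks precisely the intuition underlying those cited results. You correctly identify the key technical point---producing a \emph{single} matrix $T$ whose max-plus powers reproduce the critical contribution for all large $k$---and defer it to \cite{SeSc}, exactly as the paper does.
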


Theorem \ref{th:nach} is  a direct consequence of the so-called \emph{ultimate expansion}
of~\cite[Theorem 5.6]{SeSc}. The second part of the statement is an
extension of the cyclicity Theorem~\cite{CDQV85}, see for
instance \cite[Lemma 4.4]{gaub96}.

\section{{\bf The ultimate rank of a matrix}}\label{se-ulti}
\subsection{Statements}
For matrices $A$ and $B$ of compatible sizes, we
  have:
\begin{align*}
\trrk(AB) \leq \trrk(A),& \quad \trrk(AB) \leq \trrk(B)\:, \\
\rwrk(AB)\leq \rwrk(A), & \quad \clrk(AB) \leq \clrk(B)\:.
\end{align*}
It is straightforward to check these inequalities for $\clrk$ and $\rwrk$; the case of  $\trrk$
is proven in \cite[Theorem 9.4]{AGGu}.

In particular, $\rk_{\star} (A^\w)$ does not increase when $\w$ increases, for $\star
\in \{ \operatorname{col},  \operatorname{row},\operatorname{tr } \}$.
It is worth looking at the limit value.
Let us start with an example.

\begin{example}\label{exmp:ranks}
Consider
\begin{equation*}\label{eq:counterex}
A =
\left[ \begin{array}{rrrr} 0 & -1 & 0 & -1 \\ -1 & 0 & -1 & 0 \\
    -\infty & -\infty & -1 & -1 \\ -\infty & -\infty & -1 &
    -1 \end{array}\right] \quad \implies \quad A^\w = \left[ \begin{array}{rrrr} 0 & -1 & 0 & -1 \\ -1 & 0 & -1 & 0 \\
    -\infty & -\infty & -\w & -\w \\ -\infty & -\infty & -\w &
    -\w \end{array}\right]
\:,
\end{equation*}
for which we have $\limw \clrk(A^\w) = \clrk(A) = 4$ while
$\limw \trrk(A^\w) = \rk_\mathrm{tr}(A) = 3$.

Now, observe that the sequence $(A^{\w})_{\w\rightarrow +\infty}$ converges to the matrix
\[
B =
\left[ \begin{array}{rrrr} 0 & -1 & 0 & -1 \\ -1 & 0 & -1 & 0 \\
    -\infty & -\infty & -\infty & -\infty \\ -\infty & -\infty & -\infty &
    -\infty \end{array}\right]
\:.
\]
For the limit matrix, the different ranks coincide: $\clrk(B) =
\rwrk(B) =\trrk(B) = 2$.
\end{example}





\medskip

The projective
semigroup generated by a matrix  $A\in \T^{\nxn}$ is denoted
by $$\proj{A} = \{ \pi(A^\w), \ \w
\in \N\},$$ and its closure is denoted by
$\closproj{A}$. For instance, in Example \ref{exmp:ranks},
we have $\closproj{A} =\{\pi(A^k), \ k\in \N\} \cup \{\pi(B)\}$.


\begin{theorem}\label{th:OneGenerator}
Consider a matrix $A\in \T^{\nxn}$. For any
notion of rank (denoted $\star$) of Definition~\ref{de:ranks},
the value of $\min_{B \in \closproj{A}}\rk_{\star}
(B)$ is the same.
This common value is called the {\em ultimate rank}
of $A$ and is denoted by $ \urk(A)$.
Furthermore, we have:
\begin{equation}\label{eq:formula}
\urk(A) = \sum_{C\in  {\mathfrak C}} \cyc(C) \:,
\end{equation}
where
${\mathfrak C}$ is the set of scc of $\Gc(A)$ and
$\cyc(C)$ stands for the cyclicity of $C$.
\end{theorem}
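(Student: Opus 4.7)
The overall strategy is to sandwich the ultimate rank between the extremes of the chain \eqref{eq:relation1}--\eqref{eq:relation2}, using Theorem~\ref{th:nach} to extract a canonical limit matrix $T_\infty\in\closproj{A}$ and the spectral theory (Theorem~\ref{thm:spcThm}) to analyze it. By Theorem~\ref{th:nach}, for $k$ large, $A^k = k\rho(A)\odot(T^k\vee B_k)$ with $T$ torsion of period $c$ and $B_k\to(-\infty)$ entrywise, so along each arithmetic progression $k\equiv r\pmod{c}$ the projective images $\pi(A^k)$ converge to a matrix $\pi(T_r)\in\closproj{A}$ supported on the critical part of $T$; the $T_r$ are related by projective multiplication by powers of $A$, and hence share the same $\trrk$, $\rwrk$ and $\clrk$. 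Combined with the fact that $k\mapsto\rk_\star(A^k)$ is non-increasing for $\star\in\{\trrk,\rwrk,\clrk\}$ and that each of these ranks can only drop under projective limits, the minimum of $\rk_\star$ over $\closproj{A}$ is realized at $T_\infty := T_0$.

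For the upper bound, the key observation is that the columns of $T_\infty$ are tropical eigenvectors of $A^c$ for the eigenvalue $c\rho(A)$: the torsion relation $T^{N+c}=T^N$ translates into $A^c\odot T_\infty = c\rho(A)\odot T_\infty$. Each scc $C$ of $\Gc(A)$ of cyclicity $d_C$ splits into $d_C$ scc of $\Gc(A^c)$, one per residue class of its critical nodes modulo $d_C$; hence $\Gc(A^c)$ has exactly $\sum_C\cyc(C)$ scc. Theorem~\ref{thm:spcThm}(2) applied to $A^c$ then shows that the corresponding eigenspace has weak dimension $\sum_C\cyc(C)$, so $\clrk(T_\infty)\le \sum_C\cyc(C)$. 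A transposition argument on $A^T$ yields the same bound for $\rwrk(T_\infty)$, and the chain \eqref{eq:relation1}--\eqref{eq:relation2} propagates these bounds to every other rank, giving $\min_{B\in\closproj{A}}\rk_\star(B)\le \sum_C\cyc(C)$.

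For the matching lower bound I would exhibit in $T_\infty$ an explicit non-singular submatrix of size $\sum_C\cyc(C)$. Choose for each scc $C$ of $\Gc(A)$ one critical node from each of its $d_C$ cyclic classes modulo $d_C$; this produces a set $S$ with $|S|=\sum_C\cyc(C)$. On the submatrix $(T_\infty)_{S,S}$ the unique permutation attaining the permanent is the one induced by the canonical cyclic shift on the representatives within each scc: its entries are realized by walks lying entirely along critical circuits (mean weight $\rho(A)$), whereas any other permutation must route through a non-critical segment and therefore strictly lowers the weight. This yields $\trrk(T_\infty)\ge \sum_C\cyc(C)$; combined with the chain \eqref{eq:relation1}--\eqref{eq:relation2} and the monotonicity discussed above, one obtains $\rk_\star(B)\ge \sum_C\cyc(C)$ for every $B\in\closproj{A}$ and every $\star$. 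Together with the upper bound this yields the formula \eqref{eq:formula} and the independence of the minimum from $\star$.

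The main obstacle is the uniqueness step in the lower bound: one must rule out the existence of a second permutation on $S$ realizing the same permanent value. This requires a careful combinatorial analysis of walks of length a multiple of $c$ in $\cG(A)$ between the chosen representatives, arguing that any deviation from the canonical cyclic shift must either leave the critical circuits of some scc or cross between two distinct cyclic classes (or between distinct scc), in each case incurring a strict weight loss.
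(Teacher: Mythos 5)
Your overall architecture (pass to the torsion part $T$, compute along the periodic orbit, sandwich between tropical rank and column/row rank) matches the paper's, but there are two genuine gaps, and both are exactly what the paper's key device --- reducing to an \emph{idempotent} matrix in the orbit closure and proving a dedicated lemma about idempotents --- is designed to repair.

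\smallskip
\textbf{Upper bound.} You argue that the columns of $T_\infty$ lie in the $c\rho(A)$-eigenspace of $A^c$, that this eigenspace has weak dimension $\sum_C\cyc(C)$, and conclude $\clrk(T_\infty)\le\sum_C\cyc(C)$. The inference is not justified: containment of a finitely generated tropically convex set inside another does not in general bound the weak dimension of the smaller one by that of the larger (weak dimension counts the size of the minimal generating set, and this is not monotone under inclusion). What you actually get from containment is a factorization $T_\infty = BC$ with $B\in\T^{n\times r}$, hence $\shrk(T_\infty)\le r$ --- but that is on the wrong side of the chain \eqref{eq:relation1} to bound $\clrk$. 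The paper avoids this entirely: it picks the \emph{idempotent} $T_j$ in the periodic orbit, for which the column space is provably \emph{equal} to the eigenspace of $T_j$ (Lemma~\ref{le:idem}(1)), so weak dimensions coincide and the spectral theorem applies cleanly.

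\smallskip
\textbf{Lower bound.} You construct the index set $S$ with one critical node per cyclicity class per scc of $\Gc(A)$, and you need the cyclic-shift permutation to be the \emph{unique} maximizer on $(T_\infty)_{S,S}$. You flag this yourself as the main obstacle and leave it open. It really is the crux, and the "careful combinatorial analysis of walks of length a multiple of $c$" you sketch is delicate precisely because $T_\infty$ is an arbitrary point of the periodic orbit. The paper again sidesteps this by working with the idempotent $T_j$: for an idempotent $E$, every critical node carries a \emph{loop} in $\Gc(E)$ (Lemma~\ref{le:idem}(2)), so the chosen $r\times r$ submatrix has the \emph{identity} as a maximizing permutation, and uniqueness is immediate --- any other maximizer would produce a critical circuit meeting two of the chosen representatives, contradicting the one-per-scc choice. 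That reduction from "cyclic shift on cyclicity classes of $\Gc(A)$" to "identity on scc of $\Gc(A^\ell)$" is exactly the boolean-matrix step in the paper's proof (the scc of $\Gc(A^\ell)$ for $B^\ell$ idempotent are the cyclicity classes of $\Gc(A)$), and it is what makes the permanent-uniqueness argument trivial rather than a delicate walk analysis.

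\smallskip
In short: both of your unfinished steps are precisely the ones that the idempotent lemma (Lemma~\ref{le:idem}) closes. Without it, the upper bound is unjustified as stated and the lower bound is admittedly incomplete.
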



Theorem \ref{th:OneGenerator} is original. However, related results appear in the litterature. In \cite[25.7: Fact
4]{AGGu} it is stated  that all the ranks coincide for a matrix $A$
which is von Neumann regular 
(\ie such that $\exists X, \ AXA=A$). A proof of this
statement appears in \cite[Corollary 1.3]{IJK}.
In \cite[Lemma 3.3]{merl10} it is proven that $\trrk(A)$ is the number of scc of
$\Gc(A)$ if $A\in \R^{\nxn}$ and $A$ is idempotent (\ie $A^2=A$).

\medskip

If $A$ is irreducible or, more generally, if each scc of $\cG(A)$
contains a critical node, then the projective semigroup
$\proj{A}$ is finite (cf. Theorem \ref{th:nach}). In this case, $\closproj{A}= \proj{A}$, and we obtain:
\begin{equation}\label{eq:only-irred}
\urk(A) = \limw \ \rk_{\star} (A^\w)\:,
\end{equation}
for any notion of rank $\star$. The equality \eref{eq:only-irred} is
not true in general as emphasized by Example \ref{exmp:ranks}.

\medskip

Theorem \ref{th:OneGenerator} will be proved in Section~\ref{sse:ProofOneGenerator}.
Before this, we give two corollaries and one example to illustrate the result.


\begin{corollary}\label{co:ultmax}
For a matrix $A\in \T^{\nxn}$, the following statements are
equivalent:
\begin{enumerate}
\item $A$ has maximal ultimate rank, that is, $\urk(A)=n$;
\item $\Gc(A)$ is the disjoint union of simple circuits covering
  all the nodes $\{1,\dots ,n\}$;
\item $\per(A)$ has a unique maximizing permutation and $\Gc(A)$ is
  the graph of this permutation.
\end{enumerate}
Furthermore, if the above hold, then $\per(A) = n\rho(A)$.
\end{corollary}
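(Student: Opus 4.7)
The plan is to prove the chain of equivalences (1) $\iff$ (2) $\iff$ (3), with the formula $\per(A)=n\rho(A)$ falling out of the argument for (2) $\Rightarrow$ (3). The equivalence (1) $\iff$ (2) is a direct consequence of Theorem~\ref{th:OneGenerator}, while (2) $\iff$ (3) is a combinatorial statement about permutations whose graph realizes $\Gc(A)$.

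For (1) $\iff$ (2), the first step is to establish the following sharp bound as a preliminary lemma: for every non-trivial strongly connected graph $C$, $\cyc(C)\leq |V(C)|$, with equality if and only if $C$ is a single simple circuit. The inequality is immediate since every vertex lies on a simple circuit of length at most $|V(C)|$ and $\cyc(C)$ divides each such length. For the equality case, if $\cyc(C)=|V(C)|=k$, then every simple circuit in $C$ has length exactly $k$, hence is Hamiltonian; if $C$ contained any edge outside a fixed Hamiltonian circuit $H$, then chaining this extra edge with the appropriate arc of $H$ would close a simple circuit of length strictly less than $k$, a contradiction. Summing over the scc of $\Gc(A)$, which are vertex-disjoint subsets of $\{1,\dots,n\}$, the formula $\urk(A)=\sum_{C\in \mathfrak{C}}\cyc(C)$ of Theorem~\ref{th:OneGenerator} gives $\urk(A)\leq n$, with equality if and only if each scc is a simple circuit and these circuits cover all of $\{1,\dots,n\}$, which is exactly (2). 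Note that every vertex of $\Gc(A)$ lies on a critical circuit, so no scc of $\Gc(A)$ is trivial and the lemma applies uniformly.

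For (2) $\Rightarrow$ (3), assume $\Gc(A)$ is a disjoint union of simple circuits covering $\{1,\dots,n\}$; then its edge set defines a permutation $\sigma$ via $\sigma(i)=j \iff (i,j)\in \Gc(A)$. Each component circuit is critical: any edge of $\Gc(A)$ lies on a critical simple circuit of $\G(A)$, and the only simple circuits contained in a disjoint union of simple circuits are the components themselves. Hence each component has mean weight $\rho(A)$, so $\sum_i A_{i\sigma(i)}=n\rho(A)$. Any other permutation $\tau$ decomposes into cycles which are simple circuits of $\G(A)$ of mean weight at most $\rho(A)$, so $\sum_i A_{i\tau(i)}\leq n\rho(A)$. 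Therefore $\per(A)=n\rho(A)$, and equality for a permutation $\tau$ forces every cycle of $\tau$ to be critical, i.e.\ every $(i,\tau(i))$ to lie in $\Gc(A)$; but $\Gc(A)$ is the functional graph of $\sigma$, so this forces $\tau=\sigma$. The converse (3) $\Rightarrow$ (2) is immediate: the graph of a permutation is, by its cycle decomposition, a disjoint union of simple circuits covering $\{1,\dots,n\}$.

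The only delicate step is the cyclicity lemma underlying (1) $\iff$ (2); the remainder of the proof is a combinatorial unpacking of the definitions once Theorem~\ref{th:OneGenerator} and the description of $\Gc(A)$ as the union of critical circuits are in hand. The ``furthermore'' statement $\per(A)=n\rho(A)$ is then recorded for free from the argument for (2) $\Rightarrow$ (3).
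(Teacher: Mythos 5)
Your proof is correct and follows essentially the same route as the paper: apply the formula of Theorem~\ref{th:OneGenerator} to get (1) $\iff$ (2), then a combinatorial decomposition of permutations into disjoint simple circuits bounded by $\rho(A)$ to get (2) $\iff$ (3) and the formula $\per(A)=n\rho(A)$. The one thing you do more carefully than the paper is spell out the lemma that $\cyc(C)\le|V(C)|$ for a non-trivial scc $C$ with equality iff $C$ is a single simple circuit (and the check that no scc of $\Gc(A)$ is trivial), a step the paper asserts without proof.
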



A weak version of Corollary \ref{co:ultmax}
appears in \cite[Corollary 3.5]{merl10}.

\begin{proof}[Proof of Corollary \ref{co:ultmax}]
According to \eref{eq:formula}, the ultimate rank is $n$ if and only if
$\sum_{C\in {\mathfrak C}} \cyc(C)=n$, which happens if and only if
$\Gc(A)$ is a union of disjoint simple circuits covering all the
nodes $\{1,\dots , n\}$.
Hence, $\Gc(A)$ is the graph of a permutation $\tau$ of $\{1,\cdots,n\}$.

For any  permutation~$\sigma$ of $\{1,\cdots,n\}$, $w(\sigma)=A_{1\sigma(1)}+\ldots+A_{n\sigma(n)}$ is the total weight of the edges following this permutation.
Since $\sigma$ can be decomposed into disjoint simple circuits $\sigma_1,\ldots,\sigma_k$ of lengths $\ell_1,\ldots,\ell_k$ and mean weights $w_1,\ldots,w_k$, we have
$w(\sigma)=\ell_1w_1+\dots+\ell_kw_k\le(\ell_1+\dots+\ell_k)\rho(A)=n\rho(A)$,
with equality if and only if all of the circuits of $\sigma$ are
critical circuits of $\cG(A)$.
Consequently, $\per(A)=n\rho(A)$ and $\tau=\tau_A$.
\end{proof}

\begin{corollary}\label{co:pol1} The ultimate rank of $A\in \T^{\nxn}$
can be computed with an algorithm of time-complexity $O(n^3)$.
\end{corollary}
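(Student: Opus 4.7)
The plan is to show that each ingredient in the formula
$$\urk(A) = \sum_{C\in\mathfrak{C}} \cyc(C)$$
from Theorem~\ref{th:OneGenerator} can be computed within $O(n^3)$ time, and then to add them together.

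First I would compute the maximal mean weight of circuits $\rho(A)$ by Karp's algorithm, which runs in $O(nm)$ where $m$ is the number of arcs of $\cG(A)$; since $m\le n^2$ this is $O(n^3)$. Next, to extract the critical graph $\Gc(A)$, I would replace $A$ by $A_\rho := (-\rho(A))\odot A$, so that the critical circuits of $\cG(A)$ are exactly the zero-weight circuits of $\cG(A_\rho)$. Letting $D_{ji}$ be the maximal weight of a walk from $j$ to $i$ in $\cG(A_\rho)$, computed via a max-plus Floyd--Warshall in $O(n^3)$, the arc $(i,j)$ belongs to $\Gc(A)$ if and only if $(A_\rho)_{ij} + D_{ji} = 0$. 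This test is $O(1)$ per arc, hence $O(n^2)$ in total.

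Once $\Gc(A)$ is built, its strongly connected components can be computed by Tarjan's algorithm in linear time $O(n+m)=O(n^2)$. For each scc $C$ I would determine the cyclicity $\cyc(C)$ by the standard BFS trick: fix any vertex $v\in C$, compute the BFS level $\ell(u)$ of every $u\in C$, and take the gcd over all arcs $(u,u')$ of $C$ of $\ell(u)-\ell(u')+1$; this equals the gcd of the circuit lengths of $C$ and costs linear time in the size of $C$. Summing the resulting cyclicities over all scc yields $\urk(A)$ in time $O(n^2)$.

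The overall complexity is dominated by the computation of $\rho(A)$ and of the all-pairs longest paths in $\cG(A_\rho)$, both $O(n^3)$, and all other steps fit well below this bound. The main conceptual step is the characterization of $\Gc(A)$ via the identity $(A_\rho)_{ij}+D_{ji}=0$, which is immediate from the definitions: an arc $(i,j)$ lies on some zero-weight circuit of $\cG(A_\rho)$ iff there is a walk from $j$ back to $i$ whose weight cancels $(A_\rho)_{ij}$, and since no circuit of $\cG(A_\rho)$ has positive weight (by definition of $\rho(A)$) this walk automatically realizes the maximum $D_{ji}$. This is the only non-bookkeeping observation needed, and it is where I expect to spend the most care in a formal write-up.
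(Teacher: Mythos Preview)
Your proposal is correct and follows essentially the same three-step approach as the paper's proof: compute $\rho(A)$ via Karp's algorithm, extract the critical graph in $O(n^3)$, then compute the cyclicities of its scc's. The paper's proof is terser, simply citing references for each step (Karp for $\rho(A)$, \cite{ABGa} for $\Gc(A)$, and Denardo's algorithm \cite{dena} for cyclicity), whereas you spell out concrete algorithms---in particular your Floyd--Warshall characterization of $\Gc(A)$ and the BFS/gcd computation of cyclicity are standard instantiations of what those references provide.
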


\begin{proof}[Proof of Corollary \ref{co:pol1}]
The value of $\rho(A)$ can be computed using Karp's formula whose
time-complexity is $O(n^3)$, see for instance \cite[Theorem
2.19]{BCOQ}. The critical graph $\Gc(A)$ can also be computed with
time-complexity $O(n^3)$, see for instance \cite[Section 25.3 - Fact 13]{ABGa}.
The cyclicity
can be computed using Denardo's algorithm whose time complexity is
again $O(n^3)$, see \cite{dena}.
\end{proof}



\begin{example}
Consider the matrix $A$ defined in Example \ref{ex:1}.
Recall that $\clrk(A)= \rwrk(A)=3$ and
$\trrk(A)=2$. The critical graph of $A$ is strongly
connected and has cyclicity 1.
Applying Formula \eref{eq:formula}, we get
$\urk(A)=1$. Let us check this directly. We have:
\[
\forall \w\geq 2, \quad A^\w = A^2 =  \left[ \begin{array}{rrr} 0 & 0 & 0 \\ 0 & 0 & 0  \\ -1 & -1 & -1
\end{array}\right] \:.
\]
Therefore, $\urk(A) = \clrk(A^2)=
\rwrk(A^2)=\trrk(A^2)=1$.
\end{example}

\subsection{Proof of Theorem \protect{\ref{th:OneGenerator}}.}\label{sse:ProofOneGenerator}

We first prove the result for a matrix $E$ which is idempotent, that is,
satisfies $E^2=E$. In this case, $\closproj{E}= \{\pi(E)\}$. So we just have to
prove that: $\rwrk(E)=\clrk(E)=\trrk(E)$ equals the sum of the
cyclicities of the scc of $\cG_c(E)$. It is a direct corollary of the next
lemma.

\begin{lemma}\label{le:idem}
Let $E\in\T^{n\times n}$ be idempotent (\ie $E^2=E$) and let $r$ denote the number of scc of $\Gc(E)$:
\begin{enumerate}
\item The only eigenvalue of~$E$ belonging to $\R$ is $0$; the associated eigenspace is
the convex hull of the columns of $E$.
The column rank, resp. row rank, of $E$ is~$r$.
\item In~$\cG_c(E)$, each node holds a loop. The tropical rank of $E$ is~$r$.
\end{enumerate}
\end{lemma}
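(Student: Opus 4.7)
The plan is to dispatch the trivial case $E=(-\infty)$ first (where $r=0$ and every assertion holds vacuously) and then establish $\rho(E)=0$ as the common backbone of the argument. Since $E^2=E$ iterates to $E^k=E$ for all $k\ge 1$, the graph $\cG(E)$ must contain a circuit—otherwise $E^n=(-\infty)\neq E$—so $\rho(E)\neq-\infty$. For any circuit of length $j$ through a node $i$, the identity $(E^{jN})_{ii}=E_{ii}$ combined with $(E^{jN})_{ii}\ge Nj\rho(E)$ forces $\rho(E)\le 0$, and the mere presence of a circuit forces the matching lower bound, so $\rho(E)=0$.

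For part~(1), I would note that any real eigenvalue $\lambda$ with eigenvector $\bfu$ satisfies $\lambda\odot\bfu=E\bfu=E^2\bfu=(2\lambda)\odot\bfu$, forcing $\lambda=0$. The columns of $E$ are eigenvectors for $0$ since $E\odot E_{\cdot j}=(E^2)_{\cdot j}=E_{\cdot j}$; conversely every fixed point satisfies $\bfu=E\bfu=\bigvee_j(u_j\odot E_{\cdot j})$, exhibiting the eigenspace as precisely the convex hull of the columns. The column rank then equals the weak dimension of this eigenspace, which by Theorem~\ref{thm:spcThm}(\ref{i:weakdim}) is $r$. The row rank follows by transposing: $E^\trn$ is idempotent, and $\cG_c(E^\trn)$ is $\cG_c(E)$ with arcs reversed, so its scc and cyclicities coincide with those of $\cG_c(E)$.

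For part~(2), I would first deduce that every critical node holds a loop. If $i$ lies on a critical circuit of length $j$, its weight is $j\rho(E)=0$, so $(E^j)_{ii}\ge 0$; combined with $E^j=E$ this yields $E_{ii}\ge 0$, and the trivial inequality $(E^2)_{ii}\ge 2E_{ii}$ together with $(E^2)_{ii}=E_{ii}$ gives $E_{ii}\le 0$. Hence $E_{ii}=0$, i.e.\ a critical loop at $i$. The upper bound $\trrk(E)\le\clrk(E)=r$ comes from \eqref{eq:relation1}. For the matching lower bound, I would pick one representative $i_S$ from each of the $r$ scc of $\cG_c(E)$ and argue that the $r\times r$ submatrix $M=(E_{i_S i_T})_{S,T}$ is non-singular: its diagonal is zero by the loop result, so the identity permutation already reaches $\per(M)=0$.

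The crux is verifying that the identity is the \emph{unique} maximizer of $\per(M)$. A non-identity permutation $\sigma$ of the representatives decomposes into disjoint cycles of length $\ge 2$ whose nodes lie in distinct scc of $\cG_c(E)$. If any entry of $E$ in such a cycle equals $-\infty$, its contribution is $-\infty$; otherwise the cycle is a genuine circuit of $\cG(E)$ of mean weight at most $\rho(E)=0$. Equality would make that circuit critical, forcing its nodes into a single scc of $\cG_c(E)$ and contradicting the choice of distinct representatives. Thus every non-identity permutation contributes strictly less than $0$, $M$ is non-singular, and $\trrk(E)\ge r$, completing the lemma.
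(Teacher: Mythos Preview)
Your proof is correct and follows essentially the same approach as the paper: both identify the eigenspace with the column hull, invoke Theorem~\ref{thm:spcThm}(\ref{i:weakdim}) for the weak dimension, establish critical loops via $E^\ell=E$, and exhibit a non-singular $r\times r$ submatrix by selecting one representative per critical scc. The only organizational difference is that you prove $\rho(E)=0$ directly at the outset (your justification of the lower bound ``the mere presence of a circuit forces the matching lower bound'' is a bit terse, but is easily completed), whereas the paper deduces it from part~(1) via the spectral theorem; and you obtain $\lambda=0$ from $\lambda\odot\bfu=2\lambda\odot\bfu$ rather than from $\bfu\in K$.
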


\begin{proof}
(1). Let $K$ be the convex hull of the
  columns of $E$ and let $\widetilde{K}$ be the
  eigenspace associated to 0.
Since $E^2=E$, we get that  $E\odot E_{\cdot \, j}=E_{\cdot \,
    j}$ for any column~$E_{\cdot \, j}$. Thus $K \subset
  \widetilde{K}$. Conversely, if $\bfu = (u_i)_i$ is an eigenvector of $E$ associated to some eigenvalue $\lambda\in\R$, then $$\bfu =-\lambda\odot E\odot \bfu
= \bigvee_j (u_j-\lambda) \odot E_{\cdot \, j}.$$ Therefore,  $\bfu
\in K$, which implies both that $\lambda=0$ and that $\widetilde{K} \subset
K$. So $K=\widetilde{K}$. But the weak dimension of $K$ is the
  column rank of $E$ (definition) and  the weak dimension of
  $\widetilde{K}$ is the number of scc of $\cG_c(E)$ (Theorem
  \ref{thm:spcThm}-(\ref{i:weakdim})). So $\clrk(E)=r$. We obtain
  similarly that $\rwrk(E)=r$.

\medskip

(2).
If $E$ is null, the statement is true. Otherwise, according to
  the above, $\rho(E)=0$. Assume we are in this situation.
Let~$i$ be a node of~$\cG_c(E)$. By definition, there is a critical
circuit from $i$ to $i$ of length $\ell\ge 1$, that is, $E^{\ell}_{ii} =
\ell  \rho(E)$. Since $E^{\ell}=E$ and $\rho(E)=0$, we deduce that
$E_{ii} =0$: there is a loop around $i$ in $\cG_c(E)$.

Now, consider a submatrix $Q$ of $E$ obtained by picking up exactly
one index in each scc of $\Gc(E)$ and by restricting the rows and
columns to this set of indices.
By construction, the maximum mean weight of circuits is~$0$ in~$E$ and in~$Q$ and $\per(Q)=\sum_i Q_{ii}=0$ is attained by the identity permutation.
Assume that there exists another permutation realizing the permanent,
say mapping node $i$ to $j\ne i$.
Then there is a circuit of weight $0$ containing both $i$ and $j$,
and this circuit was already present in $\Gc(E)$, which contradicts
the fact that we selected one index per scc.  Thus, the matrix $Q$ is
non-singular, and $\trrk(E)\geq r$.
But, according to \eref{eq:relation1}, $\trrk(E )\leq \clrk(E) = r$, and we conclude that $\trrk(E)=r$.
\end{proof}

We now turn to the proof of Theorem~\ref{th:OneGenerator} in the
general case.
If $\G(A)$ is acyclic then Theorem \ref{th:OneGenerator} is clearly
true. So we assume that $\G(A)$ is not acyclic, or, equivalently,
that $\rho(A)\neq -\infty$.
Observe that $(-\rho(A))\odot A$ has the same ranks, critical graph,
and cyclicity as $A$, but has spectral radius~0. Thus, without
loss of generality, we assume that $\rho(A)=0$.

\medskip

Set
$d =\sum_{C\in  {\mathfrak C}} \cyc(C)$,
where
${\mathfrak C}$ is the set of scc of $\Gc(A)$ and
$\cyc(C)$ stands for the cyclicity of $C$ (right-hand side of
(\ref{eq:formula})).
Since the tropical rank is the minimal one and the column and row
ranks are the maximal ones, it is enough to
prove that:
\begin{enumerate}[(i)]
\item all matrices in $\closproj{A}$ have tropical rank greater
  or equal to~$d$;
\item there is a matrix~$P\in \closproj{A}$ such that~$\clrk(P)=\rwrk(P)=d$.
\end{enumerate}
Applying Theorem \ref{th:nach}, we get that, for~$k$ large enough,
$A^k=T^k\vee B_k$.
Since $T$ is torsion, the finite entries of $T^k$ are uniformly bounded, while the entries of $B_k$ tend to~$-\infty$.
So, for $k$ large enough, we have:
\[
(T^k)_{ij}\neq -\infty \quad \implies \quad (A^k)_{ij}= (T^k)_{ij} \:.
\]
We deduce that a non-singular  submatrix of $T^k$ corresponds to a
non-singular submatrix of $A^k$, for $k$ large enough.
This proves that $\trrk(A^k)\ge\trrk(T^k)$, for $k$ large enough.
Let $T_i, i\in I$, be the matrices in the periodic part of the
ultimately periodic sequence $(T^k)_k$.
Observe that $\closproj{A} = \{\pi(A^k), k\in \N\} \cup \{\pi(T_i),
i\in I\}$. So we have:
\[
 \min_{B \in \closproj{A}} \trrk(B)= \min_k \trrk(T^k) = \min_{i\in I}
\trrk(T_i) \:.
\]
Since the sequence $\trrk(T^k)$ is non-increasing, we deduce that
$\trrk(T_i)$ is the same for all $i$.
Observe that there exists a matrix $T_j,j\in I,$ which is idempotent.
By Lemma~\ref{le:idem}, $\trrk(T_j)$ is equal to the number
of scc of $\Gc(T_j)$. 
Now, the same argument as above shows that
$\Gc(T^k)=\Gc(A^k)$, for $k$ large enough. So $\Gc(T_j)=\Gc(A^{\ell})$ for
some $\ell$, and $\trrk(T_j)$ is equal to the number
of scc of $\Gc(A^{\ell})$. Let $B$ be the boolean adjacency matrix of
$\Gc(A)$. Then  $B^{\ell}$ is the boolean adjacency matrix of
$\Gc(A^{\ell})=\Gc(T_j)$. In particular $B^{\ell}$ is idempotent, and the theory
of boolean matrices (see e.g.~\cite[Chapter 3.4]{BrRy})  tells us
that the scc of $\cG(B^{\ell})=\Gc(A^{\ell})$ are exactly the cyclicity classes
of~$\cG(B)=\Gc(A)$. We conclude that the number of scc of $\Gc(A^{\ell})$
is $d$. 
So we have $\trrk(T_j)=d$.

To prove~$(ii)$, we notice that, according to Lemma
\ref{le:idem}, $\clrk(T_j)=\rwrk(T_j)=\trrk(T_j)=d$. It completes the proof.

\subsection{Visualization and ultimate rank}\label{sse-visu}

Visualization is a standard notion already appearing
for instance in
\cite{cuni79}, and recently developped in~\cite{SSBu}. We prove a
result on the visualization of matrices with maximal ultimate rank,
Theorem \ref{th:visual}-(4), that plays an important role in Section \ref{se-ultimsemi}.

\medskip

For any finite vector ${\bf u}= (u_1, \dots, u_n)\in\R^n$, define
$\diag({\bf u})\in \T^{n\times n}$ by:
\begin{equation}\label{eq:diag}
 \diag({\bfu})_{ij}=
\left\{ \begin{array}{cc}
 u_i & \text{if } i =  j, \\[1mm]
  - \infty  & \text{if } i\neq j .
\end{array} \right.
\end{equation}

For $A\in \T^{\nxn}$, set
$\tlA =\diag({\bf -u}) A  \diag({\bf
  u})$,
where ${\bf -u}= (-u_i)_i$. The entries of $ \tlA$
satisfy:
\begin{equation}\label{eq:tlA}
\tlA_{ij}=A_{ij}+u_j-u_i.
\end{equation}
Matrices $A$ and $\tlA$ share many properties.

\begin{lemma}\label{le:trivial}
Given a matrix $A\in\T^{\nxn}$ and a finite vector~${\bf u}\in\R^n$,
set $\tlA =\diag({\bf -u}) A  \diag({\bf u}),$ then
$\rho(\tlA)=\rho(A)$, $\Gc(\tlA) = \Gc(A)$, $\per(\tlA)=\per(A)$,
and $\rk_{\star}(\tlA)=\rk_{\star}(A)$ for all the ranks of Definition~\ref{de:ranks}.
\end{lemma}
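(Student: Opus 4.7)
The plan is to exploit the pointwise identity $\tlA_{ij}=A_{ij}+u_j-u_i$ together with the fact that $\diag(\bfu)$ is tropically invertible with inverse $\diag(-\bfu)$: indeed, $\diag(\bfu)\odot\diag(-\bfu)$ is the tropical identity matrix (zeros on the diagonal, $-\infty$ off it). Since $\bfu$ has finite entries, the $-\infty$ pattern of $A$ and $\tlA$ coincides, so $\cG(\tlA)$ and $\cG(A)$ share the same arcs and only the weights differ.

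For $\rho$ and $\Gc$, I would note that for any walk $p=(i_0,\ldots,i_\ell)$ in this common graph, the extra terms $u_{i_{k+1}}-u_{i_k}$ telescope, so the weight of $p$ in $\tlA$ exceeds its weight in $A$ by $u_{i_\ell}-u_{i_0}$. On circuits this difference vanishes, so the mean weights of all circuits match; hence $\rho(\tlA)=\rho(A)$ and the two critical graphs coincide. For the permanent, the same telescoping applied to a permutation $\sigma\in\mathfrak{S}_n$ gives $\sum_i(u_{\sigma(i)}-u_i)=0$, so $\sum_i\tlA_{i\sigma(i)}=\sum_iA_{i\sigma(i)}$ for every $\sigma$, which yields $\per(\tlA)=\per(A)$ and preserves the set of maximising permutations.

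For the ranks of Definition~\ref{de:ranks}, the unifying observation is that left multiplication by $\diag(-\bfu)$ and right multiplication by $\diag(\bfu)$ are tropical semimodule isomorphisms. This handles $\clrk$ and $\rwrk$ immediately, since they are defined as weak dimensions of the convex hulls of columns/rows, which are bijectively mapped onto one another; it handles $\shrk$ since a factorisation $A=BC$ transfers to $\tlA=(\diag(-\bfu)B)(C\diag(\bfu))$ and vice versa; and it handles $\gmrrk$ and $\gmcrk$ since Gondran--Minoux linear independence is preserved by semimodule isomorphisms. For the submatrix-based ranks $\trrk$ and $\symrk$, I would observe that any $r\times r$ submatrix satisfies $\tlA_{I,J}=\diag(-\bfu_I)A_{I,J}\diag(\bfu_J)$, so the permanent argument applied within $\mathfrak{S}_r$ preserves $\per$, $\det^+$ and $\det^-$, and in particular (non-)singularity. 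For $\kprk$, the same diagonal similarity corresponds to a monomial rescaling in the Kapranov lift, which leaves the classical rank unchanged.

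The only step that is not entirely mechanical is the Kapranov case, and even there the invariance under monomial rescalings is standard; all the other verifications reduce, uniformly, to the telescoping identity and the tropical invertibility of $\diag(\bfu)$.
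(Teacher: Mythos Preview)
Your argument is correct and is exactly the natural one: the paper itself does not spell out a proof at all, simply declaring the lemma ``straightforward'' and moving on. Your telescoping computation for circuits and permutations, together with the observation that $\diag(\bfu)$ is tropically invertible, is precisely the verification the authors have in mind; the only place you go slightly beyond the paper is in commenting on $\kprk$, which the paper does not even define explicitly.
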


The proof of the lemma is straightforward.

\begin{definition}
A matrix $A\in \T^{\nxn}$ is said to be {\em visualized} (resp. \emph{strictly visualized}) if:
\begin{equation*}
\begin{array}{ll}
 A_{ij}=\rho(A)   & \text{for all} \ (i, j) \in \Gc(A), \\[1mm]
A_{ij} \leq \rho(A)\text{ (resp. }<\rho(A)\text)    &  \text{for all} \ (i, j) \notin \Gc(A).
\end{array}
\end{equation*}
A finite vector ${\bf u}=(u_i)_i \in\R^n$ is called a {\em (strict)
  visualization}  of~$A$ if the matrix $ \diag({\bf -u}) A  \diag({\bf
  u})$ is (strictly) visualized.
\end{definition}

We now define the fundamental cell introduced in a
different form in \cite{butk00}.

\begin{definition}\label{de:fcell}
The {\em fundamental cell} of a non-singular matrix $A \in\T^{\nxn}$ is defined as the set
\[\fcell(A) =
\left\{ {\bf x} \in \R^n \mid \forall i, \forall j \neq \tau_A(i),
  \quad
  A_{ij} + x_j < A_{i\tau_A(i)} + x_{\tau_A(i)} = (A \odot \bfx)_i \right\}\:.
\]
The {\em fundamental cell} of a singular matrix $A$ is empty, $\fcell(A)=\emptyset$.
\end{definition}

Let us mention several properties of the fundamental cell which can
be obtained by adapting the results from \cite{butk00} and
\cite{merl10}. 

Let $A$ be non-singular. The fundamental cell $\fcell(A)$ is non-empty
and its topological dimension is $n$ since it is an open set.
Consider the mapping  $\varphi_A: \R^n \rightarrow
\R^n, \ {\bf x} \mapsto A\odot {\bf x}$. 
We have
\[
\fcell(A) = \varphi_A^{-1}(\bigl\{ {\bf x} \in \R^n, \ \exists !
{\bf y} \in
\R^n, {\bf x}=\varphi_A({\bf y)} \bigr\})\:.
\]
(The set $\bigl\{ {\bf x} \in \R^n, \ \exists !
{\bf y} \in
\R^n, {\bf x}=\varphi_A({\bf y)} \bigr\}$ is defined for any
matrix $A$, and is non-empty if and only if $A$ is non-singular.)
The restriction of
$\varphi_A$ to the domain $\fcell(A)$ is the
affine map given by:
$$
\begin{array}{ll}
\varphi_A: \fcell(A)     \longrightarrow  \R^n, &   \\[1mm]
\qquad \ {\bf x}  \longmapsto {\bf y},    \ y_i = A_{i\tau_A(i)} +
x_{\tau_A(i)} \:. &
\end{array}$$
In particular, on the domain $\fcell(A)$, the map $\varphi_A$  is an isometry
for the euclidean distance.




\medskip

The next theorem  summarizes the results that we need about
visualizations.

\begin{theorem}\label{th:visual}
Let~$A\in \T^{n\times n}$ be a matrix with $\rho(A)\neq -\infty$.
\begin{enumerate}
 \item The visualizations of $A$ are the vectors ${\bf u}\in
   \R^n$ such that $A\odot {\bf u} \leq \rho(A)\odot {\bf u}$.
 \item There exists a strict visualization of $A$.
\end{enumerate}
Assume that $\Gc(A)$ contains all the nodes $\{1,\dots ,
n\}$.
\begin{enumerate}
 \item[(3)] The visualizations of~$A$ are the eigenvectors 
   (associated to $\rho(A)$).
\end{enumerate}
Assume that $\Gc(A)$ is the disjoint union of circuits covering all
the nodes $\{1,\dots , n\}$ (equivalently, that $\urk(A)=n$).
\begin{enumerate}
\item[(4)] The strict visualizations of~$A$ are the eigenvectors (associated to $\rho(A)$)
  belonging to the fundamental cell.
\end{enumerate}
\end{theorem}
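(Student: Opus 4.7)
The common strategy is to unpack the condition ``$\tilde A := \diag(-{\bf u}) A \diag({\bf u})$ is (strictly) visualized'' via the identity $\tilde A_{ij}=A_{ij}+u_j-u_i$ from \eqref{eq:tlA}, turning each assertion into a statement about linear inequalities satisfied by $\bf u$. Once the definitions are rewritten this way, most of the equivalences are essentially bookkeeping; the only step needing a genuine construction is the existence in Part~(2).

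\emph{Part~(1).} The forward direction is immediate: the conditions $\tilde A_{ij}\le\rho(A)$ for every $(i,j)$ (with equality on $\Gc(A)$) say exactly $A\odot{\bf u}\le\rho(A)\odot{\bf u}$. For the converse, the inequality $A\odot{\bf u}\le\rho(A)\odot{\bf u}$ gives $\tilde A_{ij}\le\rho(A)$ throughout, and equality on a critical arc $(i,j)$ is forced by a telescoping argument: on a critical circuit $i=i_0\to i_1\to\cdots\to i_\ell=i$ containing the arc $(i,j)$, the differences $u_{i_{k+1}}-u_{i_k}$ cancel when summed, so $\ell\rho(A)=\sum_k A_{i_k i_{k+1}}\le\ell\rho(A)$ and every inequality along the way must be tight.

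\emph{Part~(2).} I would exhibit an explicit strict visualization built from the Kleene star of $\bar A=(-\rho(A))\odot A$ (which satisfies $\rho(\bar A)=0$): choosing representatives $c_1,\ldots,c_r$ of the scc's of $\Gc(A)$, the vector $u_i:=\bigvee_k (\bar A^\sharp)_{i c_k}$, with $\bar A^\sharp=I\vee\bar A\vee\cdots\vee\bar A^{n-1}$ (and a shift on nodes that cannot reach a critical component to avoid $-\infty$), is an eigenvector of $A$ for $\rho(A)$, hence a visualization by Part~(1). Strictness on non-critical arcs $(i,j)$ comes from the fact that any walk through such an arc cannot itself lie on a critical circuit, so it accrues strictly less than $\rho(A)$ per step on average; this is precisely the argument of \cite{SSBu}, which I would cite.

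\emph{Part~(3).} Under the hypothesis that every node lies in $\Gc(A)$, each $i$ has an outgoing critical arc $(i,j)$. For any visualization $\bf u$, the equality $\tilde A_{ij}=\rho(A)$ yields $(A\odot{\bf u})_i\ge A_{ij}+u_j=\rho(A)+u_i$, and combined with Part~(1) this forces $A\odot{\bf u}=\rho(A)\odot{\bf u}$. Conversely, any eigenvector satisfies $A\odot{\bf u}\le\rho(A)\odot{\bf u}$ and is therefore a visualization by Part~(1).

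\emph{Part~(4).} By Corollary~\ref{co:ultmax}, $\urk(A)=n$ means that $\Gc(A)$ is exactly the graph of the unique permanent-maximizer $\tau_A$. Rewriting the strict visualization conditions via $\tilde A_{ij}=A_{ij}+u_j-u_i$ then reads $A_{i\tau_A(i)}+u_{\tau_A(i)}-u_i=\rho(A)$ for every $i$ together with $A_{ij}+u_j-u_i<\rho(A)$ for $j\ne\tau_A(i)$. The strict inequalities show that $(A\odot{\bf u})_i$ is uniquely attained at $j=\tau_A(i)$ with value $\rho(A)+u_i$, which simultaneously expresses that $\bf u$ is an eigenvector for $\rho(A)$ and that $\bf u\in\fcell(A)$. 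Both implications of Part~(4) follow from this identification. The main obstacle, as noted, is the construction of a strict visualization in Part~(2); all other steps are direct translations between the visualization formalism and the eigenvector/fundamental-cell formalism.
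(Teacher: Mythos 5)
Your Parts (1), (3), and (4) follow exactly the paper's argument: unpack the visualization condition via $\widetilde A_{ij} = A_{ij} + u_j - u_i$, telescope around a critical circuit to force equality on critical arcs, use an outgoing critical arc at each node to upgrade ``visualization'' to ``eigenvector'' under the hypothesis of (3), and in (4) identify $\Gc(A)$ with the graph of $\tau_A$ (via Corollary~\ref{co:ultmax}) to match the strict-visualization inequalities with the definition of $\fcell(A)$. These are correct and essentially verbatim the paper's proof.

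The one place where you deviate is Part~(2), and the sketch you give there does not work, even though you land on the right reference. Taking $u_i = \bigvee_k (\bar A^\sharp)_{ic_k}$ for representatives $c_k$ of the critical scc's produces a (sub-)eigenvector, hence a visualization, but \emph{not} in general a strict one. Consider
\[
A = \begin{pmatrix} 0 & 0 \\ 0 & -1 \end{pmatrix},
\]
for which $\rho(A)=0$, the only critical node is $1$, and $\bar A^\sharp = I \vee A$ has critical column $(0,0)^\top$. The resulting $u=(0,0)^\top$ is an eigenvector, but $A_{12} + u_2 - u_1 = 0 = \rho(A)$ while $(1,2)\notin\Gc(A)$, so $u$ is not strict. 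The heuristic you offer (walks through a non-critical arc ``accrue strictly less than $\rho(A)$ per step on average'') does not control the single quantity $A_{ij}+u_j-u_i$ that strictness requires. The actual construction in \cite[Proposition~3.4]{SSBu} uses a max-plus convex combination of \emph{all} columns of the Kleene star (or an equivalent perturbation argument), not a max over critical columns. The paper sidesteps this by citing \cite{SSBu} directly for Part~(2), and you should do the same rather than proposing the (incorrect) max-column construction.
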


\begin{proof}
Property (1) is folklore. Let us prove
it for completeness sake. First, the vector ${\bf u}\in
   \R^n$ is a visualization of $A$ if and only if
   \begin{equation}\label{eq:vis} \forall i,j, \qquad A_{ij}+u_j -u_i
   \leq \rho(A) \:.\end{equation} 
Indeed, assume \eref{eq:vis} holds and
consider a critical cycle $(i_1,\dots , i_k,i_{k+1}=i_1)$ of $\cG(A)$, then:
\[
\sum_{\ell =1}^{k} A_{i_{\ell}i_{\ell+1}}+u_{i_{\ell+1}}
-u_{i_{\ell}} = \sum_{\ell =1}^{k} A_{i_{\ell}i_{\ell+1}} =
k\rho(A)\:.
\]
Thus, \eqref{eq:vis} implies that $ A_{i_{\ell}i_{\ell+1}}+u_{i_{\ell+1}}
-u_{i_{\ell}} =\rho(A)$ for all $\ell$.

Second, we have the following equivalences:
\begin{eqnarray*}
\bigl[ \forall i,j, \ A_{ij}+u_j -u_i
   \leq \rho(A) \bigr] & \iff & \bigl[ \forall i, \ \max_j (A_{ij}+u_j)
   \leq \rho(A) +u_i \bigr] \\
& \iff & \bigl[ A \odot {\bf u} \leq \rho(A)\odot {\bf u} \bigr] \:.
\end{eqnarray*}
This completes the proof of property (1).

Property (2) is proven in~\cite[Proposition~3.4]{SSBu}.

To obtain
property $(3)$, it
suffices to show that $[A\odot {\bf u} \leq \rho(A)\odot {\bf
  u}]\implies [A\odot {\bf u} = \rho(A)\odot {\bf
  u}]$. Assume that  ${\bf u}\in
   \R^n$ satisfies $A\odot {\bf u} \leq \rho(A)\odot {\bf
  u}$, and fix an arbitrary node~$i$. Since $\Gc(A)$ contains all the nodes, there
  exists a critical arc $(i , j)$.  By property (1), the vector ${\bf u}$ is a
visualization of $A$. In particular,
we have  $A_{ij} + u_{j}-
u_{i} = \rho(A)$, see \eref{eq:tlA}. Hence, $(A\odot {\bf u})_i \geq A_{ij}+u_j = \rho(A) +
u_i$. So we have proved that  $A\odot {\bf u} \geq \rho(A)\odot {\bf
  u}$. We conclude that $A\odot {\bf u} = \rho(A)\odot {\bf
  u}$.

Let us show property $(4)$. Assume that $\urk(A)=n$, then, the
visualizations of~$A$  are its eigenvectors by property (3). Let ${\bf
  u}$ be an eigenvector. Then 
${\bf u}$ is a visualization which is strict if and only if $A_{ij} + u_j -u_i <
\rho(A)$ for $(i , j)\not\in \Gc(A)$.
By Corollary \ref{co:ultmax}, we
have $$[(i , j)\not\in \Gc(A)] \ \iff \ [j \neq \tau_A(i)],$$ where $\tau_A$ is
the unique maximizing permutation of $A$. Therefore,  the
visualization~${\bf u}$ is strict if and only if
\[
\forall i, \forall j \neq \tau_A(i), \qquad A_{ij} + u_j <
\rho(A) + u_i = (A\odot {\bf u})_i = A_{i\tau_A(i)} + u_{\tau_A(i)} \:,
\]
which is equivalent to saying that ${\bf u}$ belongs to $\cF(A)$.
\end{proof}

\section{{\bf The ultimate rank of a semigroup of matrices}}\label{se-ultimsemi}
\subsection{Statements}\label{sse-Statements}

Let us extend the notion of ultimate rank to a semigroup of tropical
matrices.

\begin{definition}
Let $\SG \subset \T^{\nxn} $ be a semigroup of tropical
matrices. The \emph{ultimate rank} of $\SG$, denoted $ \urk(\SG)$, is
defined by:
$$ \urk(\SG) = \min \{ \rk_{\star} (A) \ds | A \in
\overline{\pi(\SG)} \}= \min \{ \urk (A) \ds | A \in \overline{\pi(\SG)} \}.$$
\end{definition}

We do not
know if the ultimate rank of a given finitely generated semigroup is
algorithmically computable. However, we prove a partial result.
Indeed, we give two characterizations of the case of maximal ultimate rank,
one combinatorial and one geometric. As a by-product, we
obtain a polynomial-time algorithm to decide if the ultimate rank is
maximal.

\medskip

\begin{lemma}\label{pr:burnside}
Let ${\SG}$ be a finitely generated semigroup of matrices of
$\T^{\nxn}$. Assume that $\forall P \in \SG, \urk(P)=n$. Then $\pi({\SG})$ is finite.
\end{lemma}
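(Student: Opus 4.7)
The plan is to use the multiplicative structure enforced by $\urk=n$ to reduce the statement to fibres of a permutation-valued invariant, and then argue by compactness.

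First, Corollary~\ref{co:ultmax} tells us that every $P\in\SG$ has $\Gc(P)$ equal to the graph of the unique permanent-maximising permutation $\tau_P$, and satisfies $\per(P)=n\rho(P)$. Since every product in $\SG$ still has $\urk=n$, hence $\trrk=n$, all products are non-singular, so Proposition~\ref{pr:merlet} applies iteratively: for any $P=A_{i_1}\cdots A_{i_k}\in\SG$,
\[
\tau_P=\tau_{A_{i_k}}\circ\cdots\circ\tau_{A_{i_1}},\qquad
\per(P)=\sum_j\per(A_{i_j}),\qquad
\rho(P)=\sum_j\rho(A_{i_j}).
\]
Replacing each generator $A_j$ by $(-\rho(A_j))\odot A_j$, an operation that leaves $\pi(\SG)$ unchanged, we may therefore assume $\rho(A_j)=0$ for all $j$, and then $\rho(P)=0$ for every $P\in\SG$. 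Under this normalisation any two projectively equivalent elements of $\SG$ must coincide (their common rescaling factor $\lambda$ satisfies $\lambda=\rho(\lambda\odot P')-\rho(P')=0$), so showing $\pi(\SG)$ finite reduces to showing $\SG$ finite as a set of matrices.

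Next, $\tau:P\mapsto\tau_P$ is a semigroup anti-homomorphism from $\SG$ into the finite group $\mathfrak{S}_n$, and so has finite image. It is therefore enough to prove that each fibre $\SG_\sigma=\{P\in\SG:\tau_P=\sigma\}$ is finite.

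To finish I would use a compactness argument. Fix a projective cross-section, for example normalising the maximal entry of each matrix to $0$; then $\SG_\sigma$ embeds into a compact subset of $\T^{n\times n}$. If $\SG_\sigma$ were infinite, one would extract a convergent subsequence $P_k\to Q\in\overline{\pi(\SG)}$. In the limit, the strict gap that Corollary~\ref{co:ultmax} requires between the unique permanent-maximising permutation and every other may close, so that several permutations attain $\per(Q)$, or equivalently, non-critical entries of $P_k$ rise to equal their critical neighbours, adjoining extra circuits to $\Gc(Q)$ beyond the graph of $\sigma$; in either description this forces $\urk(Q)<n$. The main obstacle is then to transfer this drop of ultimate rank from $\overline{\pi(\SG)}$ back inside $\pi(\SG)$ itself, so as to contradict the hypothesis: using that $\overline{\pi(\SG)}$ is closed under multiplication together with the bound $\trrk(QX)\le\trrk(Q)<n$ of \cite[Theorem 9.4]{AGGu} (already cited at the beginning of Section~\ref{se-ulti}), combined with a semicontinuity and density argument, one exhibits an element of $\pi(\SG)$ of tropical—and hence ultimate—rank strictly less than $n$, the desired contradiction.
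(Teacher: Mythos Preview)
The first half of your argument is correct: after normalising each generator to $\rho=0$, every $P\in\SG$ has $\rho(P)=0$, the map $\pi$ is injective on $\SG$, and $P\mapsto\tau_P$ is an anti-homomorphism into $\mathfrak{S}_n$ with finite image. The gap is in the compactness endgame, and it has two parts. First, nothing forces an accumulation point $Q$ of an infinite $\SG_\sigma$ to satisfy $\urk(Q)<n$: a sequence of distinct non-singular matrices with common optimal permutation $\sigma$ can perfectly well converge to a non-singular limit still having $\tau_Q=\sigma$; you yourself write that the strict gap ``may close'', not that it must. Second, even granting some $Q\in\overline{\pi(\SG)}$ with $\trrk(Q)<n$, your proposed transfer back to $\pi(\SG)$ fails: the set of singular matrices is \emph{closed}, so elements of $\pi(\SG)$ near $Q$ can all remain non-singular, and multiplying $Q$ by elements of $\SG$ only produces further elements of the closure. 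No combination of semicontinuity and density bridges this direction.

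The paper's proof sidesteps both issues by invoking an external nontrivial result, Gaubert's positive answer to the Burnside problem in $\P\T^{n\times n}$ \cite[Theorem~2.1]{gaub96}: a finitely generated torsion subsemigroup of $\P\T^{n\times n}$ is finite. Since $\urk(P)=n$ makes every node of $\cG(P)$ critical (Corollary~\ref{co:ultmax}), the second part of Theorem~\ref{th:nach} gives that each $\pi(P)$ is torsion, and Gaubert's theorem then finishes immediately. Your compactness sketch is, in effect, an attempt to reprove this Burnside-type statement by soft means, and that is where it breaks down.
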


\begin{proof}
In \cite[Theorem 2.1]{gaub96}, it is proven that a finitely generated torsion semigroup of $\P\T^{\nxn}$ is finite
(\ie the Burnside problem has a positive answer in $\P\T^{\nxn}$).
Therefore,  we only have to prove that $\pi({\SG})$ is torsion, \ie for any $P\in\SG$, there exist
$\w,c >0$, such that $\pi(P^{\w+c})=\pi(P^\w)$.

Since $\urk(P)=n$, by Corollary~\ref{co:ultmax}, all the nodes of
$\cG(P)$ are critical. Then, by Theorem \ref{th:nach}, the matrix
$(-\rho(P))\odot P$ is torsion, or, equivalently, $P$ is projectively
torsion.
\end{proof}

Lemma \ref{pr:burnside} enables to obtain a simplified
characterization of finitely generated semigroups of maximal ultimate
rank.

\begin{lemma}\label{pr:def}
Let ${\SG}$ be a finitely  generated semigroup of matrices of
$\T^{\nxn}$. Then
\[
\bigl[ \urk(\SG) = n \bigr] \ \Longleftrightarrow \ \bigl[ \forall P \in \SG, \urk(P) = n
\bigr]\:.
\]
\end{lemma}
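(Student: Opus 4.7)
The proof will be short because most of the substantive content has been packaged into Lemma~\ref{pr:burnside}. I plan to handle the two implications separately, each in a few lines.

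For the implication $(\Leftarrow)$, I assume that every $P \in \SG$ satisfies $\urk(P) = n$. Lemma~\ref{pr:burnside} then applies and gives that $\pi(\SG)$ is finite. A finite subset of $\P\T^{\nxn}$ is already closed, so $\overline{\pi(\SG)} = \pi(\SG)$. Plugging this into the definition of $\urk(\SG)$,
\[
\urk(\SG) \;=\; \min_{A\in\pi(\SG)} \urk(A) \;=\; n,
\]
since every matrix in $\SG$ has ultimate rank~$n$ by assumption.

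For the implication $(\Rightarrow)$, I assume $\urk(\SG) = n$ and fix any $P \in \SG$. Because $\SG$ is a semigroup containing $P$, the projective cyclic subsemigroup satisfies $\pi\langle P\rangle \subseteq \pi(\SG)$, and taking closures yields $\overline{\pi\langle P\rangle} \subseteq \overline{\pi(\SG)}$. Applying Theorem~\ref{th:OneGenerator} with any fixed notion of rank $\star$, I get
\[
\urk(P) \;=\; \min_{B\in\overline{\pi\langle P\rangle}} \rk_\star(B) \;\geq\; \min_{B\in\overline{\pi(\SG)}} \rk_\star(B) \;=\; \urk(\SG) \;=\; n.
\]
The reverse inequality $\urk(P) \leq n$ is trivial since ranks of $n\times n$ matrices are at most $n$. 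Hence $\urk(P) = n$.

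There is no real obstacle here: the nontrivial work is entirely in Lemma~\ref{pr:burnside} (the positive solution of the Burnside problem in $\P\T^{\nxn}$ for torsion projective semigroups, combined with Theorem~\ref{th:nach} to show that each individual generator is projectively torsion when $\urk(P)=n$). Once that lemma is granted, the equivalence follows purely from monotonicity of ``min over a subset'' and the fact that the closure adds nothing when $\pi(\SG)$ is already finite.
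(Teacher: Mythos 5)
Your proof is correct and follows the same structure as the paper's: the paper dismisses the $(\Rightarrow)$ direction as ``obvious'' and uses Lemma~\ref{pr:burnside} exactly as you do for the $(\Leftarrow)$ direction. You merely flesh out the ``obvious'' part by spelling out the inclusion $\overline{\pi\langle P\rangle} \subseteq \overline{\pi(\SG)}$ and the resulting monotonicity of the minimum, which is a reasonable unpacking rather than a genuinely different route.
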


\begin{proof}
Obviously, we have $\bigl[ \urk(\SG) = n \bigr] \implies \bigl[
\forall P \in \SG, \urk(P) = n\bigr]$. Conversely, assume that $\forall P \in
\SG, \urk(P) = n$. By Lemma \ref{pr:burnside}, $\pi({\SG})$ is
finite, and, in particular, $\pi({\SG})=\overline{\pi(\SG)}$. We
conclude that $\urk(\SG) = n$.
\end{proof}

%

Let us state the main results.

\begin{theorem}[Combinatorial characterization]\label{Thm:Comb}
Let $\SG=\langle\gen\rangle$ be the semigroup generated
by a finite set $\gen$ of matrices in~$\T^{n\times n}$.

For $A\in \gen$, define $\widetilde{A} \in \T^{n\times n}$ by: $\forall i,j, \ \widetilde{A}_{ij} = A_{ij}
- \rho(A)$.
Set $M=\bigvee_{A\in\gen} \widetilde{A}$. We have $\urk(\SG)=n$ if and
only if the following three properties are satisfied:

\begin{description}
\item[(C1)]\label{i:cond3.1} $\forall A \in\gen, \ \urk(A) = n$;
 \item[(C2)]\label{i:cond3.2} $\rho(M)=0$;
 \item[(C3)]\label{i:cond3.3} $\forall A \in\gen, \forall i,j \in \{1,\dots,
   n\}$, \begin{equation*}\Bigl[
   (i, j) \in \Gc(M), \widetilde A_{ij}=M_{ij} \Bigr]
   \implies  (i,j) \in \Gc(A). \end{equation*}
\end{description}
\end{theorem}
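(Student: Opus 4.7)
My plan is to prove the two implications of Theorem \ref{Thm:Comb} separately, with the backward direction being essentially a combinatorial computation and the forward direction being more delicate. For a product $P = A^{(1)}\odot\cdots\odot A^{(k)}$ of generators I use the normalization $\widetilde P := \widetilde A^{(1)}\odot\cdots\odot \widetilde A^{(k)}$, which equals $P$ shifted by the scalar $\sum_\ell \rho(A^{(\ell)})$; since $\widetilde A^{(\ell)}\le M$ entrywise, $\widetilde P \le M^k$ and hence $\rho(\widetilde P) \le k\rho(M)$.

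For the backward direction, Lemma \ref{pr:def} reduces the problem to showing $\urk(P) = n$ for every such product $P$, which by Corollary \ref{co:ultmax} amounts to identifying $\Gc(P)$ with the graph of a permutation covering all nodes. Since $\rho(M) = 0$ by (C2), the bound $\rho(\widetilde P)\le k\rho(M) = 0$ gives $\rho(P)\le \sum_\ell\rho(A^{(\ell)})$; on the other hand, summing over $i$ the weights of the canonical walks $i\to\tau_{A^{(1)}}(i)\to\tau_{A^{(2)}}\tau_{A^{(1)}}(i)\to\cdots$ yields, using (C1) and Corollary \ref{co:ultmax}, $\per(P)\ge \sum_\ell \per(A^{(\ell)}) = n\sum_\ell\rho(A^{(\ell)})$. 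Combined with the universal bound $\per(P)\le n\rho(P)$, everything is tight and the permutation $\tau_0 := \tau_{A^{(k)}}\circ\cdots\circ\tau_{A^{(1)}}$ maximizes $\per(P)$. The inclusion that $\Gc(P)$ contains the graph of $\tau_0$ follows because each cycle of $\tau_0$ must be critical in $P$ (otherwise the tight weight bound could not be attained). For the reverse inclusion, take $(a,b)\in\Gc(P)$ lying on a critical $P$-cycle of length $L$ and concatenate the $L$ optimal product-space walks into a closed walk of length $Lk$ in $\cG(M)$; comparing its $\widetilde P$-weight ($=0$) with its $M$-weight (at most $Lk\rho(M) = 0$) forces $\widetilde A^{(\ell)}_{y_{\ell-1},y_\ell} = M_{y_{\ell-1},y_\ell}$ and $(y_{\ell-1},y_\ell)\in\Gc(M)$ at every step, whereupon (C3) yields $y_\ell = \tau_{A^{(\ell)}}(y_{\ell-1})$. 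The walk is canonical and $b = \tau_0(a)$, so $\Gc(P)$ equals the graph of $\tau_0$.

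For the forward direction, (C1) is immediate from Lemma \ref{pr:def}; combined with Proposition \ref{pr:merlet} and Corollary \ref{co:ultmax}, it also gives $\rho(P) = \sum_\ell \rho(A^{(\ell)})$ and $\rho(\widetilde P) = 0$ for every product $P$. For (C2), $\rho(M)\ge 0$ is trivial, and if $\rho(M)>0$ I pick a critical $M$-cycle of length $k$ and generators $A^{(\ell)}$ with $\widetilde A^{(\ell)}_{j_{\ell-1},j_\ell} = M_{j_{\ell-1},j_\ell}$ on each arc, then observe $\widetilde P_{j_0,j_0}\ge k\rho(M) > 0$, which contradicts the loop bound $\widetilde P_{j_0,j_0}\le \rho(\widetilde P) = 0$. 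For (C3), given $(i,j)\in\Gc(M)$ with $\widetilde A_{ij}=M_{ij}$, I pick a critical $M$-cycle through $(i,j)$, set $A^{(1)}=A$, and for every other arc $(j_{\ell-1},j_\ell)$ select a max-achieving generator that additionally satisfies $\tau_{A^{(\ell)}}(j_{\ell-1})=j_\ell$ (when such a choice exists). The same computation gives $\widetilde P_{ii} = 0$, so $(i,i)\in\Gc(P)$, and Corollary \ref{co:ultmax} forces $\tau_P(i)=i$; Proposition \ref{pr:merlet} rewrites this as $\tau_{A^{(k)}}\circ\cdots\circ\tau_A(i)=i$. Meanwhile the preferred choices at arcs $\ell\ge 2$ propagate $\tau_{A^{(\ell)}}(j_{\ell-1})=j_\ell$ along the cycle, giving $\tau_{A^{(k)}}\circ\cdots\circ\tau_{A^{(2)}}(j)=i$. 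Bijectivity then forces $\tau_A(i)=j$, i.e., $(i,j)\in\Gc(A)$.

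The main obstacle is the availability of the \emph{good} choice above: the forward (C3) argument needs, at every arc $(j_{\ell-1},j_\ell)$ for $\ell\ge 2$, some max-achieving generator to satisfy (C3) at that arc, which is not a priori guaranteed when all max-achievers at that arc are themselves violators of (C3). This is handled either by iterating the same construction on such auxiliary violations and exploiting finiteness of $\Gc(M)$ to close the combinatorics, or more cleanly by invoking the geometric characterization (Theorem \ref{th:geom}): a common strict visualization $u$ in the fundamental cells of the generators gives $\widetilde A_{ij}+u_j-u_i < 0$ whenever $j\ne\tau_A(i)$, so $\widetilde A_{ij} = M_{ij}$ together with $(i,j)\in\Gc(M)$ forces $j = \tau_A(i)$ directly.
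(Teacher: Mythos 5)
Your backward direction (conditions $(C1)$--$(C3)$ imply $\urk(\SG)=n$) is correct and takes a genuinely different route from the paper. The paper proves this implication indirectly, via the chain $(P2)\implies(P3)\implies(P1)$: it first shows that under $(C1)$--$(C3)$ the generators admit a common strict visualization, and then uses the affine-isometry property of the fundamental cell to conclude that every product is non-singular. You instead give a purely combinatorial argument identifying $\Gc(P)$ exactly as the graph of $\tau_{A^{(k)}}\circ\cdots\circ\tau_{A^{(1)}}$ for every product $P$, which avoids the visualization and fundamental-cell machinery altogether. The chain of inequalities $\rho(\widetilde P)\le k\rho(M)=0$, $\per(P)\ge\sum_\ell\per(A^{(\ell)})=n\sum_\ell\rho(A^{(\ell)})$, $\per(P)\le n\rho(P)$, together with the expansion of a critical circuit of $P$ into a length-$Lk$ closed walk of $M$-weight $0$, is tight and clean. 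This is a nice self-contained alternative for that implication.

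Your forward direction, however, has a genuine gap in $(C3)$, which you yourself flag: the argument needs, at every arc $(j_{\ell-1},j_\ell)$ with $\ell\ge 2$ of the chosen critical $M$-circuit, a max-achieving generator $A^{(\ell)}$ that \emph{also} satisfies $\tau_{A^{(\ell)}}(j_{\ell-1})=j_\ell$. Such a choice exists for all arcs exactly when $(C3)$ holds, which is what you are trying to prove, so the direct construction is circular as stated. Your first proposed repair (iterating the construction on auxiliary violations and appealing to finiteness of $\Gc(M)$) is only a sketch; it is unclear that the recursion terminates or that it produces the desired contradiction, since the ``auxiliary violations'' need not be strictly simpler in any useful sense. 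Your second repair (invoking Theorem~\ref{th:geom} to get a common strict visualization) is circular in the other direction: the paper establishes $(P1)\implies(P3)$ precisely \emph{through} $(C3)$ (Lemma~\ref{le:3.3necess} then the step $(P2)\implies(P3)$), so $(G)$ is not available as a black box when proving $(C3)$. The content you are missing is exactly the paper's Lemma~\ref{le:3.3necess}: it makes no attempt to select ``good'' generators, but instead takes arbitrary max-achieving generators $A(1),\dots,A(\ell)$ along a critical $M$-circuit through the offending arc $(i_0,i_1)$, shows $(PB)_{i_1i_1}=0$ gives a critical loop, and then, crucially, uses $j=\tau_B(i_0)\ne i_1$ to get $(PB)_{i_1j}=0$ and the permutation $(\tau_B\circ\tau)^{k+1}(j)=j$ to manufacture a critical return path $j\to i_1$ of length $k+1$, so that $\Gc(PB)$ contains two distinct critical circuits through $i_1$ and hence $\urk(PB)<n$ by Corollary~\ref{co:ultmax}. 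The trick is to exploit the discrepancy between $\tau_B$ and the walk \emph{after} the fact, rather than trying to align them during construction. You would need to supply an argument of this type to close the gap.
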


\begin{theorem}[Geometric characterization]\label{th:geom}
Let $\SG=\langle\gen\rangle$ be the semigroup generated
by a finite set $\gen$ of matrices in~$\T^{n\times n}$.  We have $\urk(\SG)=n$ if and
only if the following property holds:

\begin{description}
\item[(G)] the generators have a common finite
eigenvector that belongs to the intersection of their fundamental
cells;
\end{description}
or, equivalently, if the following two properties hold:
\begin{description}
\item[(G1)] $\forall A \in\gen, \ \urk(A) = n$;
\item[(G2)] the generators have a common strict visualization.
\end{description}
\end{theorem}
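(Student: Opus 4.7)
The plan is to establish the equivalence between $\urk(\SG)=n$ and the conjunction of (G1) and (G2), and then to deduce the equivalence with (G) from Theorem~\ref{th:visual}(4). Indeed, once (G1) is known, Theorem~\ref{th:visual}(4) identifies the strict visualizations of each $A \in \gen$ with its finite eigenvectors lying in $\fcell(A)$, so (G2) and (G) coincide.

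For the forward implication, assume $\urk(\SG)=n$. Then (G1) follows directly from Lemma~\ref{pr:def}. For (G2), I would invoke the combinatorial characterization (Theorem~\ref{Thm:Comb}): condition (C2) yields $\rho(M)=0$ where $M=\bigvee_{A \in \gen}\widetilde A$, so Theorem~\ref{th:visual}(2) produces a strict visualization $\bfu$ of $M$. The claim is that $\bfu$ is already a common strict visualization of $\gen$. Since $\widetilde A \leq M$ entrywise and $\rho(M)=0$, one has $\widetilde A_{ij}+u_j-u_i \leq 0$ for every $(i,j)$. When $(i,j) \in \Gc(A)$, selecting a critical circuit of $\widetilde A$ through $(i,j)$ (of total weight $0$) and summing the inequalities $\widetilde A_{ab}+u_b-u_a \leq 0$ along it produces $0$ by telescoping, forcing equality on each edge, in particular $\widetilde A_{ij}+u_j-u_i=0$. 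Conversely, if $\widetilde A_{ij}+u_j-u_i=0$ then $M_{ij}+u_j-u_i=0$, hence $(i,j) \in \Gc(M)$ (by strictness of the visualization of $M$) with $\widetilde A_{ij}=M_{ij}$, and condition (C3) returns $(i,j) \in \Gc(A)$.

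For the reverse implication, assume $\bfu$ is a common strict visualization. Each $\widetilde A = \diag(-\bfu)\, A\, \diag(\bfu)$ is then strictly visualized with $\urk(\widetilde A)=n$ (Corollary~\ref{co:ultmax}) and unique maximizing permutation $\tau_A$. By Lemma~\ref{pr:def}, it suffices to prove $\urk(P)=n$ for every product $P=A_{i_1}\cdots A_{i_k}$ of generators. Conjugation commutes with products, so $\widetilde P=\widetilde A_{i_1}\cdots\widetilde A_{i_k}$, and I plan to argue by induction that a product of strictly visualized matrices of $\urk=n$ remains strictly visualized with $\urk=n$. The inductive step on $\widetilde A\widetilde B$ comes from the computation $(\widetilde A\widetilde B)_{ij} = \max_k(\widetilde A_{ik}+\widetilde B_{kj}) \leq \rho(A)+\rho(B)$, with equality realized only when $k=\tau_A(i)$ and $j=\tau_B\tau_A(i)$: this forces $\widetilde A\widetilde B$ to be strictly visualized with $\Gc(\widetilde A\widetilde B)$ equal to the graph of $\tau_B\circ\tau_A$ (consistent with Proposition~\ref{pr:merlet}). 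Iterating yields $\urk(\widetilde P)=n$, and Lemma~\ref{le:trivial} transfers this to $\urk(P)=n$.

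The main obstacle is the forward direction (G2), namely verifying that the strict visualization of $M$ produced by Theorem~\ref{th:visual}(2) simultaneously strictly visualizes every generator. This relies both on the cycle-summing argument above and crucially on condition (C3) of Theorem~\ref{Thm:Comb}, so the proof is genuinely intertwined with the combinatorial characterization. The remaining pieces---the reduction of (G) to (G1)+(G2) via Theorem~\ref{th:visual}(4), and the inductive propagation of strict visualization through products---are more routine once the correct framework is in place.
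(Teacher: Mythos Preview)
Your proposal is correct. The forward direction and the reduction of (G) to (G1)+(G2) follow the paper exactly: both establish (G2) by taking a strict visualization of $M$ and using (C3) to force strictness for each generator, and both invoke Theorem~\ref{th:visual}(4) for the equivalence with (G).

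The reverse direction, however, is genuinely different. The paper proves $(G)\Rightarrow\urk(\SG)=n$ geometrically: it fixes a Euclidean ball $\mathfrak B$ around the common eigenvector inside $\bigcap_{A\in\gen}\fcell(A)$, observes that each $\varphi_A$ acts as an affine isometry on $\fcell(A)$ fixing $\bfu$, hence $\varphi_A(\mathfrak B)=\mathfrak B$, and concludes that every product $P$ has image of topological dimension $n$, whence $\trrk(P)=n$ by Proposition~\ref{pr-sturmfels}. Your argument is purely algebraic: you show that the product of two strictly visualized matrices of ultimate rank $n$ is again strictly visualized of ultimate rank $n$, via the entrywise bound $\widetilde A_{ik}+\widetilde B_{kj}\le\rho(A)+\rho(B)$ with equality exactly when $k=\tau_A(i)$ and $j=\tau_B\tau_A(i)$. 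This bypasses Proposition~\ref{pr-sturmfels} and any appeal to topological dimension, and it makes the closure under products visible at the level of Corollary~\ref{co:ultmax}; the paper's route, by contrast, illuminates why the fundamental-cell formulation (G) is the natural geometric picture. One minor point: you use $\widetilde A$ with two different meanings (the scalar shift $A-\rho(A)$ in the forward direction, the conjugate $\diag(-\bfu)A\diag(\bfu)$ in the reverse); separate the notation when you write this up.
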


The proofs of Theorems \ref{Thm:Comb} and \ref{th:geom} are given in \S\ref{sse-Proofs}.
To the best of our knowledge, Theorem \ref{Thm:Comb} is completely original.
Theorem \ref{th:geom} refines \cite[Theorem 3.1]{merl10} which states
that: $[\urk(\SG)=n] \implies$ the matrices in the semigroup have a
common eigenvector. The proof of \cite[Theorem 3.1]{merl10} is
different from our proof and relies on Kakutani fixed point Theorem.

\begin{corollary}\label{co:pol2}
Let $\SG=\langle\gen\rangle$ be the semigroup generated
by a finite set $\gen$ of matrices in~$\T^{n\times n}$.
There exists an algorithm of time-complexity $O(|\gen|n^3)$
that decides whether $\urk(S) =n$.
\end{corollary}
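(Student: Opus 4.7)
The plan is to invoke the combinatorial characterization, Theorem \ref{Thm:Comb}, and to verify each of its three conditions (C1), (C2), (C3) within the claimed time budget. The key point is that everything reduces to a handful of single-matrix computations for which $O(n^3)$ subroutines are already supplied (explicitly or implicitly) by Corollary \ref{co:pol1}.

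First I would handle (C1) by looping over $A \in \gen$ and applying the $O(n^3)$ ultimate-rank algorithm of Corollary \ref{co:pol1} to each generator, for a total cost of $O(|\gen|\, n^3)$. Crucially, that algorithm already computes, as by-products, the maximal mean weight $\rho(A)$ (via Karp) and the critical graph $\Gc(A)$, so I would cache both for reuse in the next steps, avoiding a blow-up to a multiplicative bound.

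Next, for (C2), I would use the cached $\rho(A)$ to build each shifted matrix $\widetilde{A}$ in $O(n^2)$, assemble $M = \bigvee_{A\in\gen} \widetilde{A}$ entrywise in $O(|\gen|\, n^2)$, and then run Karp's algorithm once on $M$ in $O(n^3)$ to compare $\rho(M)$ with $0$. For (C3), I would compute $\Gc(M)$ in $O(n^3)$ with the same subroutine as in Corollary \ref{co:pol1} and then, for each $A \in \gen$, scan the at most $n^2$ pairs $(i,j) \in \Gc(M)$ satisfying $\widetilde{A}_{ij} = M_{ij}$, checking in $O(1)$ each that $(i,j)$ lies in the cached $\Gc(A)$; this contributes $O(|\gen|\, n^2)$ on top of the one-time $O(n^3)$ to build $\Gc(M)$.

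Tallying the contributions gives $O(|\gen|\, n^3)$ for (C1), $O(|\gen|\, n^2 + n^3)$ for (C2), and $O(n^3 + |\gen|\, n^2)$ for (C3), so the overall time is $O(|\gen|\, n^3)$, as claimed. Correctness of the decision procedure is immediate from Theorem \ref{Thm:Comb}, so there is no real obstacle to overcome; the only subtlety is the bookkeeping remark that caching the by-products of Corollary \ref{co:pol1} across the three tests is what keeps the bound additive rather than letting the individual $O(n^3)$ costs compound with each condition.
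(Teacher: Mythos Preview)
Your proposal is correct and follows essentially the same approach as the paper: invoke Theorem~\ref{Thm:Comb} and check each of (C1), (C2), (C3) using the $O(n^3)$ subroutines for $\rho(\cdot)$ and the critical graph already cited in Corollary~\ref{co:pol1}. Your version is slightly more explicit about caching the by-products $\rho(A)$ and $\Gc(A)$ across the three tests, but this is a bookkeeping refinement rather than a different argument.
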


\begin{proof}
We use the characterization in Theorem~\ref{Thm:Comb}.
Property $(C1)$ can be checked in $O(|\gen|n^3)$ using Corollary \ref{co:pol1}. The
matrix $M$ can be computed in $O(|\gen|n^2)$ and
$\rho(M)$ can be computed in $O(n^3)$ using Karp's formula, see \cite[Theorem
2.19]{BCOQ}.
So property $(C2)$ can be verified in $O(|\gen|n^2 + n^3)$.
Checking property $(C3)$ requires to
compute $\Gc(M)$ and $\Gc(A)$ for every $A\in\gen$, which can be done in
$O(|\gen|n^3)$, see for instance \cite[Section 25.3 - Fact 13]{ABGa}. 
\end{proof}

\subsection{Proof of Theorem~\ref{Thm:Comb} and Theorem~\ref{th:geom}}\label{sse-Proofs}
We carry out the proofs of
Theorems~\ref{Thm:Comb} and~\ref{th:geom} together.
Let $\SG=\langle\gen\rangle$ be the semigroup generated by the finite
set $\gen$ of matrices in~$\T^{n\times n}$.
For simplicity and without loss of generality, we assume that
$\rho(A)=0$ for any $A \in \gen$. (Otherwise, we replace $A$ by
$-\rho(A)\odot A$.)
Set $M=\bigvee_{A\in\gen} A$.
Set:
\begin{description}
\item[(P1)] $\urk(\SG) = n$;
\item[(P2)] Properties $(C1)$, $(C2)$, and $(C3)$ in Theorem
  \ref{Thm:Comb};
\item[(P3)] Properties $(G1)$ and $(G2)$ in Theorem
  \ref{th:geom}.
\end{description}
The structure of the proof is:
$${\bf (P1)} \implies {\bf (P2)} \implies {\bf (P3)} \implies{\bf (P1)} \:.$$




\medskip

[{\bf (P1)}  $\implies$  {\bf (P2)}].
Assume $(P1)$ holds. Then, $(C1)$ is obvious. Let us prove $(C2)$,
that is $\rho(M)=0$.
Observe that, for any $A\in \gen$, we have $M\geq A$, so that
$\rho(M)\ge\rho(A)= 0$.
Now, $(C2)$ follows from the following lemma.
\begin{lemma}\label{le:M}
Assume that $(C1)$ holds and that
$\rho(M)>0$. Then, there is a product~$P$ of at most~$n$ matrices of~$\gen$
for which $\urk(P)<n$.
\end{lemma}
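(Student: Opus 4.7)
The plan is to exhibit a short product $P$ of generators whose graph carries a loop of strictly positive weight; under $(C1)$ the permanent of such a product must vanish, contradicting what a maximal ultimate rank would force on $\per(P)$.

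First I would extract a simple circuit realizing $\rho(M)$. Since $\rho(M) > 0$ and the maximum mean weight over circuits of $\cG(M)$ is always achieved on a simple one, there is a simple critical circuit $(i_1, i_2, \ldots, i_k, i_{k+1} = i_1)$ with $k \leq n$ and $\sum_{\ell} M_{i_\ell i_{\ell+1}} = k \rho(M) > 0$. For each edge, the maximum defining $M_{i_\ell i_{\ell+1}} = \bigvee_{A \in \gen} A_{i_\ell i_{\ell+1}}$ is attained by some $A^{(\ell)} \in \gen$. Set $P := A^{(1)} A^{(2)} \cdots A^{(k)} \in \SG$, a product of at most $n$ generators. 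Following the walk $i_1, i_2, \ldots, i_k, i_1$ inside the expansion of the $\odot$-product gives
\[
P_{i_1 i_1} \;\geq\; \sum_{\ell=1}^{k} A^{(\ell)}_{i_\ell i_{\ell+1}} \;=\; \sum_{\ell=1}^{k} M_{i_\ell i_{\ell+1}} \;=\; k\, \rho(M) \;>\; 0,
\]
so $\cG(P)$ carries a loop at $i_1$ of positive weight, hence $\rho(P) \geq P_{i_1 i_1} > 0$.

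Next I would argue by contradiction. Suppose $\urk(P) = n$. By Corollary \ref{co:ultmax}, $P$ is non-singular and $\per(P) = n\, \rho(P) > 0$. On the other hand, $(C1)$ combined with Corollary \ref{co:ultmax} yields $\per(A^{(\ell)}) = n\, \rho(A^{(\ell)}) = 0$ for every $\ell$ (recall the normalization $\rho(A) = 0$ for $A \in \gen$ adopted at the start of the section). Non-singularity of $P$ propagates back along the factorization by Proposition \ref{pr:merlet}, and iterating its permanent-additivity formula yields $\per(P) = \sum_{\ell} \per(A^{(\ell)}) = 0$, contradicting $\per(P) > 0$. Hence $\urk(P) < n$, and $P$ has the desired property.

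The argument is essentially free of obstacles: the length bound $k \leq n$ comes for free from the existence of simple maximum-mean circuits, and the contradiction hinges entirely on the tight identity $\per = n\, \rho$ enjoyed by matrices of maximal ultimate rank (Corollary \ref{co:ultmax}) together with the multiplicativity of the permanent on non-singular products (Proposition \ref{pr:merlet}). The only mild care needed is that Proposition \ref{pr:merlet} requires the product itself to be non-singular, which is precisely what the argument by contradiction supplies.
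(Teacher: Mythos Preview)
Your proof is correct and matches the paper's own argument essentially step for step: pick a simple positive-weight circuit in $\cG(M)$, realize each arc by a generator, form the product $P$ of length at most $n$, observe $\rho(P)>0$, and derive a contradiction from $\per(P)=n\rho(P)>0$ versus $\per(P)=\sum_\ell \per(A^{(\ell)})=0$ via Corollary~\ref{co:ultmax} and Proposition~\ref{pr:merlet}. The only cosmetic difference is that you take a \emph{critical} circuit of $\cG(M)$ while the paper merely takes any simple circuit of positive weight; both choices work equally well.
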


\begin{proof}
Since $\rho(M)>0$, there is a simple circuit, say $(i_1,\cdots, i_k,i_{k+1}=i_1)$, with positive
weight~$w$ in $\G(M)$.
For each $\ell \in \{1, \dots, k \}$, choose $A(\ell) \in \gen$ such that
$A(\ell)_{i_{\ell}i_{\ell+1}}=M_{i_{\ell}i_{\ell+1}}$,
and  define $P=A(1) \cdots A(k)$. 
Observe that $\rho(P)\geq P_{i_1 i_1} \geq w>0$.
Assume that $\urk(P)=n$.
According to Proposition~\ref{pr:merlet} and Corollary~ \ref{co:ultmax}, we must have
$$\rho(P)=\frac{1}{n}\per(P) =
\frac{1}{n}\sum_{i} \per(A(i))= \sum_{i} \rho(A(i)) = 0\:.$$
So we have reached a contradiction, showing that $\urk(P)<n$.
\end{proof}

Now let us prove property $(C3)$. We need a preliminary lemma.



\begin{lemma}\label{le:visua}
Assume that $(C1)$ and $(C2)$ hold. We have:
\begin{enumerate}
\item[(i)] $\forall P \in \SG, \ \rho(P)=0$;
\item[(ii)] a visualization of~$M$ is a visualization for any $P$ in~$\SG$.
\end{enumerate}
\end{lemma}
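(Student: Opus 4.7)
The plan is to use a visualization of $M$ to simultaneously conjugate every element of $\SG$ into a matrix with nonpositive entries, and then read off both assertions from the combinatorics. Precisely, by Theorem~\ref{th:visual}(1) applied to $(C2)$, there exists $\bfu \in \R^n$ with $M \odot \bfu \leq \bfu$; equivalently, writing $\widetilde{X} := \diag(-\bfu)\, X\, \diag(\bfu)$ for any matrix $X \in \T^{n \times n}$, we have $\widetilde{M}_{ij} \leq 0$ for all $i,j$. Since each generator satisfies $A \leq M$, we get $\widetilde{A} \leq \widetilde{M} \leq 0$ entrywise, and for any word $P = A_1 \cdots A_k \in \SG$, the factorization $\widetilde{P} = \widetilde{A_1} \odot \cdots \odot \widetilde{A_k}$ yields $\widetilde{P} \leq \widetilde{M}^k \leq 0$ entrywise. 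In particular $\rho(P) = \rho(\widetilde{P}) \leq 0$ by Lemma~\ref{le:trivial}.

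For the matching lower bound in (i), I would exploit the rigidity imposed by $(C1)$. Since $\rho(\widetilde{A}) = 0$ and $\widetilde{A} \leq 0$ entrywise, every edge on a critical circuit of $\widetilde{A}$ must carry weight exactly $0$ (the mean around the cycle is $0$ and each summand is nonpositive); and by Corollary~\ref{co:ultmax} together with $(C1)$, $\Gc(A) = \Gc(\widetilde{A})$ coincides with the graph of the unique optimal permutation $\tau_A$. Hence $\widetilde{A}_{i,\tau_A(i)} = 0$ for every $i$ and every $A \in \gen$. For $P = A_1 \cdots A_k$, set $\tau = \tau_{A_k} \circ \cdots \circ \tau_{A_1}$. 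Concatenating the critical edges along the successive generators produces a $k$-step walk from $i$ to $\tau(i)$ of total weight $0$, so $\widetilde{P}_{i, \tau(i)} \geq 0$, which combined with $\widetilde{P} \leq 0$ gives $\widetilde{P}_{i, \tau(i)} = 0$. Any cyclic orbit of the permutation $\tau$ now yields a circuit of weight $0$ in $\G(\widetilde{P})$, so $\rho(\widetilde{P}) \geq 0$ and therefore $\rho(P) = 0$, which proves (i).

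Part (ii) is then immediate: by Theorem~\ref{th:visual}(1) and $\rho(P) = 0$ from (i), saying that $\bfu$ visualizes $P$ amounts to the inequalities $\widetilde{P}_{ij} \leq 0$ for all $i,j$, which is precisely what was already established above.

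The only genuinely nontrivial step is the lower bound $\rho(P) \geq 0$; the key point is that visualizing $M$ simultaneously forces every generator's critical edges to have weight $0$, so they can be stitched together along the composed permutation $\tau$ to construct the required zero-weight circuit in $\widetilde{P}$. Everything else is an entrywise comparison with $\widetilde{M}^k$.
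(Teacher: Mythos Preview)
Your proof is correct, and the upper bound $\rho(P)\leq 0$ via $\widetilde{P}\leq\widetilde{M}^k\leq 0$ matches the paper's. For the lower bound $\rho(P)\geq 0$, the paper takes a shorter route: since $(C1)$ forces every node of $\cG(A)$ to be critical, Theorem~\ref{th:visual}(3) immediately upgrades the visualization $\bfu$ of each generator to a genuine eigenvector; then $P\odot\bfu=\bfu$ for all $P\in\SG$ yields both $\rho(P)\geq 0$ and (ii) in one stroke. You instead bypass Theorem~\ref{th:visual}(3) and construct zero-weight circuits in $\cG(\widetilde{P})$ explicitly by concatenating the critical edges along the composed permutation $\tau=\tau_{A_k}\circ\cdots\circ\tau_{A_1}$. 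This is more hands-on but equivalent in content: the identity $\widetilde{A}_{i,\tau_A(i)}=0$ for all $i$, together with $\widetilde{A}\leq 0$, is precisely the statement $A\odot\bfu=\bfu$, so you are really rederiving the eigenvector property combinatorially. Two minor fixes: the existence of a visualization of $M$ is Theorem~\ref{th:visual}(2), not (1); and for (ii) you should let $\bfu$ be an \emph{arbitrary} visualization of $M$ rather than an existent one, since the claim is for every visualization (your argument already applies verbatim to arbitrary $\bfu$).
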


\begin{proof}
By Theorem \ref{th:visual}-(1), a  visualization ${\bf
  v}$  of $M$ satisfies $M\odot {\bf
  v} \leq {\bf v}$. Given a  matrix $A\in\gen$, we have
$$A\odot {\bf v} \leq M\odot {\bf v}\leq {\bf v}\:.$$
Therefore, by  Theorem \ref{th:visual}-(1) again, ${\bf v}$ is a
visualization of any generator $A$. Now any generator $A$ has maximal ultimate
rank by property $(C1)$. According to Corollary \ref{co:ultmax}, all the nodes of~
$\cG(A)$ are critical. By  Theorem \ref{th:visual}-(3), it implies that
${\bf v}$ is an eigenvector of~ $A$. Since ${\bf v}$ is a common
eigenvector of the generators, it is a common eigenvector for all the
matrices in the semigroup: $\forall P \in \SG, \ P\odot {\bf v} = {\bf
  v}$. It implies, by Theorem
\ref{th:visual}-(1), that ${\bf
  v}$ is a visualization for $P$. It also implies that $\rho(P)\geq
0$. Since $P=A_1\cdots A_{\ell}$ for some $\ell$ and $A_i\in \gen$, we
get that $P\leq M^{\ell}$, implying that $\rho(P)\leq 
\rho(M^{\ell})=0$. We conclude that $\rho(P)=0$. 
\end{proof}

From now on, we assume without loss of generality that $M$ is strictly visualized
and that all the matrices of the semigroup are visualized. This is possible for the
following reason. Recall that for any $P\in \T^{n\times n}$ and ${\bf
  u}\in \R^n$, the matrices $P$ and and $\diag(-{\bf u}) \odot P \odot \diag({\bf u})$  have
the same ranks, critical graph, etc, see Lemma \ref{le:trivial}. 
Let ${\bf v}$ be a strict visualization of $M$, which exists by
Theorem \ref{th:visual}. By Lemma \ref{le:visua}, ${\bf
  v}$ is a visualization for the matrices in the semigroup. 
Now replace $A$ by
$\diag(-{\bf v}) \odot A \odot \diag({\bf v})$ for
$A \in \gen$, and $M$ by $ \diag(-{\bf v}) \odot M \odot
\diag({\bf v})$. 

\begin{lemma}\label{le:3.3necess}
Assume that $(C1)$ and $(C2)$ hold but $(C3)$ does not.
Then, there is a product~$P$ of at
most~$n$ matrices of~$\gen$ such that $\urk(P)<n$.
\end{lemma}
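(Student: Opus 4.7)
The plan is to exhibit a product $P$ of at most $n$ generators which is tropically singular---i.e., whose permanent is attained by two distinct permutations---so that $\urk(P)<n$ follows from Corollary~\ref{co:ultmax}. Let $A_0\in\gen$ and $(i_0,j_0)$ witness the failure of (C3): $(i_0,j_0)\in\Gc(M)$, $A_0(i_0,j_0)=M(i_0,j_0)$, but $(i_0,j_0)\notin\Gc(A_0)$. Under the standing reductions ($\rho(A)=0$ for each generator, $M$ strictly visualized, each generator visualized), this says $A_0(i_0,j_0)=0$, and since $\urk(A_0)=n$ by (C1), Corollary~\ref{co:ultmax} yields $\tau_{A_0}(i_0)\ne j_0$, and in particular $i_0\ne j_0$. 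Fix a simple critical circuit of $M$ through $(i_0,j_0)$, say $i_0=i_1\to i_2=j_0\to i_3\to\cdots\to i_k\to i_1$ with $2\le k\le n$, pick $A(\ell)\in\gen$ with $A(\ell)_{i_\ell i_{\ell+1}}=0$ for $\ell=2,\dots,k$, set $A(1)=A_0$, and put $P=A(1)A(2)\cdots A(k)$.

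Assume, for contradiction, $\urk(P)=n$. By Lemma~\ref{le:visua}, $\rho(P)=0$ and $P$ is visualized, so every entry of $P$ is $\le 0$. In the layered graph $A(1)\to\cdots\to A(k)$ two families of length-$k$ walks automatically have weight $0$: the ``$M$-walk'' $i_1\to i_2\to\cdots\to i_k\to i_1$ using the chosen edges, and the ``$\tau$-walk'' from any node $i$, obtained by successively applying $\tau_{A(1)},\ldots,\tau_{A(k)}$. These give $P_{i_1i_1}=0$ and $P_{i,\tau_P(i)}=0$ for every $i$, where $\tau_P=\tau_{A(k)}\circ\cdots\circ\tau_{A(1)}$ (Proposition~\ref{pr:merlet}). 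Since $(i_1,i_1)$ is then a critical loop in $\Gc(P)$, Corollary~\ref{co:ultmax} forces $\tau_P(i_1)=i_1$.

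The key step is to mix the two walk families. Set $b:=\tau_{A(k)}\cdots\tau_{A(2)}(i_2)$ and $c:=\tau_{A(1)}^{-1}(i_2)$. The hybrid walk from $i_1$---taking the $M$-edge in $A(1)$, then $\tau$-edges in $A(2),\dots,A(k)$---has weight $0$ and endpoint $b$, so $P_{i_1,b}=0$. The hybrid walk from $c$---taking the $\tau$-edge $(c,i_2)$ in $A(1)$, then the $M$-edges $i_2\to i_3\to\cdots\to i_1$ in $A(2),\dots,A(k)$---has weight $0$ and endpoint $i_1$, so $P_{c,i_1}=0$. The inequality $\tau_{A(1)}(i_1)\ne i_2$ gives $c\ne i_1$ directly, and combining it with $\tau_P(i_1)=i_1$ and the bijectivity of $\tau_{A(k)}\cdots\tau_{A(2)}$ yields $b\ne i_1$ (otherwise $i_2=\tau_{A(1)}(i_1)$).

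Define $\sigma$ by $\sigma(i_1)=b$, $\sigma(c)=i_1$, and $\sigma(i)=\tau_P(i)$ elsewhere. Since $\tau_P(i_1)=i_1$ and $\tau_P(c)=b$, $\sigma$ is a permutation, and $\sigma\ne\tau_P$ because $b\ne i_1$. Then
\[
\sum_i P_{i,\sigma(i)}=P_{i_1,b}+P_{c,i_1}+\sum_{i\ne i_1,c}P_{i,\tau_P(i)}=0=\per(P),
\]
so $\per(P)$ has two maximizing permutations, contradicting the uniqueness part of Corollary~\ref{co:ultmax}. The main obstacle is that a single violation of (C3) provides only one ``extra'' weight-$0$ entry in $A_0$; the argument succeeds by noting that the $M$-critical circuit and the individual $\tau_{A(\ell)}$'s together engineer a two-by-two ambiguity at the twin nodes $i_1$ and $c$.
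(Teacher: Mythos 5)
Your proof is correct. It follows the same overall strategy as the paper---complete the offending arc into a simple critical circuit of $M$, pick a generator attaining the maximum on each arc, multiply, and derive a contradiction with Corollary~\ref{co:ultmax}---but the execution is genuinely different. The paper places the offending generator $B$ at the \emph{end} of the product $PB$ and shows that $\Gc(PB)$ contains both a critical loop at $i_1$ and a second critical circuit through $i_1$; to close that second circuit it must iterate $PB$, choosing a power $k$ with $(\tau_B\circ\tau)^{k+1}(j)=j$. You instead place $A_0$ at the \emph{start}, observe that the critical loop at $i_1$ forces $\tau_P(i_1)=i_1$ once $\urk(P)=n$ is assumed, and then use the two hybrid walks $P_{i_1,b}=P_{c,i_1}=0$ together with $\tau_P(c)=b$ to build the competing permutation $\sigma=\tau_P\circ(i_1\,c)$ explicitly. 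This exhibits two distinct maximizers of $\per(P)$ with no need to pass to powers, and establishes the tropical singularity of $P$ directly, which is arguably a cleaner route to the same conclusion. One small exposition slip: you assert that $\tau_{A_0}(i_0)\neq j_0$ ``in particular'' gives $i_0\neq j_0$, but that implication does not hold on its own; the correct reason is that if $i_0=j_0$ then $A_0(i_0,i_0)=0=\rho(A_0)$ makes $(i_0,i_0)$ a critical loop of $A_0$, contradicting $(i_0,j_0)\notin\Gc(A_0)$. This does not affect the rest of the argument.
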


\begin{proof}
Property $(C3)$ does not hold. So there exists $B \in \gen$ and
$(i_0,i_1) \in \Gc(M)$ such that: $B_{i_0i_1}=M_{i_0i_1}=0$, but $(i_0,i_1)
\not\in \Gc(B)$. 
The critical arc $(i_0,i_1)$
can be completed into a critical circuit $(i_0,i_1,i_2,\dots, i_{\ell},
i_{\ell+1}=i_0)$ of $\Gc(M)$. For each $k = 1,\dots,\ell$, we choose a matrix $A(k)\in\gen$
such that $A(k)_{i_k i_{k +1}}=M_{i_k i_{k+1}}$. Set $P=A(1) \cdots
A(\ell)$ and $\tau=\tau_{A(\ell)}\circ\cdots\circ \tau_{A(1)}$. (See
Figure~\ref{fig:cond3nec}.)

\begin{figure}
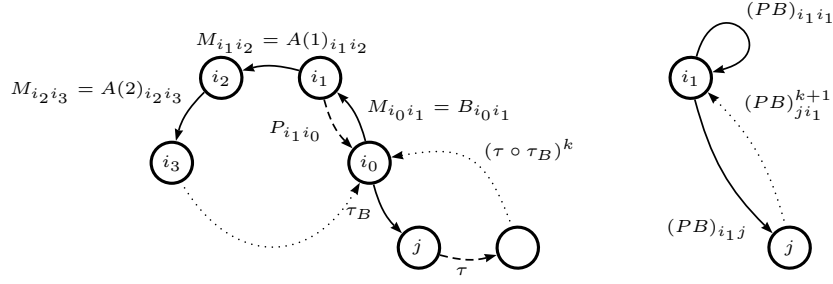


\scalebox{0.65}{
\begin{VCPicture}{(-4.5,-3)(7,4)}

\tiny
\State[i_0]{(0,0)}{a}
\VCPut[60]{(-2,0)}{\State[i_1]{(2,0)}{b}}
\VCPut[120]{(-2,0)}{\State[i_2]{(2,0)}{1}}
\VCPut[180]{(-2,0)}{\State[i_3]{(2,0)}{2}}

\VCPut[60]{(5.5,0)}{\State[i_1]{(2,0)}{bM}}
\VCPut[-120]{(9.5,0)}{\State[j]{(2,0)}{cM}}

\ArcR{a}{b}{M_{i_0i_1}=B_{i_0i_1}}
\ArcR{b}{1}{M_{i_1i_2}=A(1)_{i_1i_2}}
\ArcR[0.3]{1}{2}{M_{i_2i_3}=A(2)_{i_2i_3}}

\VCPut[-120]{(2,0)}{\State[j]{(2,0)}{c}}
\ArcR{a}{c}{\tau_B}

\ChgEdgeLineStyle{dashed}

\ArcR{b}{a}{P_{i_1i_0}}

\VCPut[-60]{(2,0)}{\State{(2,0)}{3}}
\ArcR{c}{3}{\tau}

\ChgEdgeLineStyle{dotted}
\VArcR{arcangle=-40,ncurv=1.1}{3}{a}{(\tau\circ\tau_B)^k}
\VArcR{arcangle=-60,ncurv=1.1}{2}{a}{}
\ArcR[0.75]{cM}{bM}{(PB)^{k+1}_{ji_1}}

\ChgEdgeLineStyle{solid}
\ArcR[0.85]{bM}{cM}{(PB)_{i_1j}}
\LoopNE[0.5]{bM}{(PB)_{i_1i_1}}
\end{VCPicture}
}

\caption{Illustration of the proof of Lemma \ref{le:3.3necess}}\label{fig:cond3nec}

\end{figure}

By construction, $P_{i_1i_0}=0$. We have:
\[
(PB)_{i_1i_1} \geq P_{i_1i_0} + B_{i_0i_1} = 0 \:.
\]
Since $PB$ is visualized by assumption and  $\rho(PB)=0$ by Lemma
\ref{le:visua}-(i), we get $(PB)_{i_1i_1} =0$. There is a loop
around $i_1$ in $\Gc(PB)$. Let us prove that there is another critical
circuit going through~$i_1$ in $\Gc(PB)$.

Let $j\neq i_1$ be such that $(i_0,j) \in \Gc(B)$. By visualization,
$B_{i_0j}=0$. Therefore,
\[
(PB)_{i_1j}  \geq P_{i_1i_0}  + B_{i_0j} = 0 \ \implies \ (PB)_{i_1j} =0 \:.
\]
Let $k\in \N$ be such that $(\tau_B\circ\tau)^{k+1} (j)=j$. We have:
$$
\tau\circ(\tau_B\circ\tau)^{k} (j)=\tau_B^{-1} (j)=i_0 \quad \implies
\quad 
((PB)^{k}P)_{ji_0}=0 \quad \implies \quad (PB)^{k+1}_{ji_1}=0\:.
$$
Thus, in $\Gc(PB)$, there is a circuit going from $i_1$ to $j$ and
back to $i_1$. 
Consequently, we have built a product $PB\in\SG$ of at most~$n$ matrices whose critical
graph contains two different circuits passing through a node. Then,  $\urk(PB)<n$ by Corollary~
\ref{co:ultmax}.
\end{proof}

Since {\bf (P1)} holds, property (C3) follows from Lemma \ref{le:3.3necess}. This completes
the proof.

\medskip

[{\bf (P2)} $\implies$ {\bf (P3)}].

First of all, Property~$(G1)$ is the same as Property~$(C1)$, so it follows from~$(P2)$.

To prove that~$(G2)$ holds, let us assume as above that $M$ is strictly visualized and that the matrices in $\SG$
are visualized. We are going to show that the generators are strictly visualized which will prove $(G2)$.

Consider $A \in \gen$ with
$A_{ij}=0$ for some $i,j \in \{1,\dots, n\}$. Then $M_{ij}\geq A_{ij}=0$. Thus $M_{ij}=0$, and by
strict visualization $(i, j)\in \Gc(M)$. We
conclude by $(C3)$ that  $(i, j) \in \Gc(A)$. This means precisely that $A$ is
strictly visualized.



\medskip

The fact that $(G1)$ plus $(G2)$ is equivalent to $(G)$ follows directly from  Theorem \ref{th:visual}-(4).

\medskip

[{\bf (P3)} $\implies$ {\bf (P1)}].


Let $\bfu\in \R^n$ be a common eigenvector of all the generators
that lies in the intersection $\mathcal W = \bigcap_{A \in
  \gen}\fcell(A)$, which exists according to property $(G)$. Let us show that every $P\in\SG$, written as a product $A_1\cdots A_{\ell}$, $A_i\in \gen$, is non-singular.

Denote by  $d(\cdot\; , \cdot)$ the Euclidean distance of $\R^n$.
Since $\mathcal W$ is a non-empty intersection of finitely many open sets, there exists $\varepsilon>0$ such that the ball
${\mathfrak B} = \bigl\{ {\bf x}\in \R^n \mid d(\bfu, {\bf x}) \leq
\varepsilon \} \subset \mathcal W$. We use the notations:
$\varphi_i: {\bf x} \mapsto A_i \odot {\bf x}$,  and   $\varphi_P:
{\bf x} \mapsto P\odot {\bf x}$.
Recall that $\varphi_i$ is an affine isometry on $\fcell(A_i)$, see
\S \ref{sse-visu}. Since
$A_i\odot \bfu =\bfu$, we obtain that $\varphi_i({\mathfrak B}) =
{\mathfrak B}$.  By composition,  we get that $\varphi_P({\mathfrak B}) =
{\mathfrak B}$. In particular, ${\mathfrak B}$ is included in the
image of $\varphi_P$, or, equivalently, in the tropical convex hull of the
columns of $P$. Since ${\mathfrak B}$ is of topological dimension~$n$,
we get that  the tropical convex hull of the
columns of $P$ is of topological dimension $n$. Using Proposition
\ref{pr-sturmfels}, we conclude that $P$ is non-singular.

Therefore, we have proved that all the matrices in $\SG$ have tropical
rank $n$. In particular, given a matrix $P$ in $\SG$, all the products
$P^k$ have tropical rank $n$, which implies that the ultimate rank of
$P$ is $n$. According to Lemma \ref{pr:def}, this implies that the ultimate
rank of $\SG$ is $n$.

\subsection{Examples}

We illustrate Theorems \ref{Thm:Comb} and \ref{th:geom} using four
successive examples. All the matrices in these examples belong to
$\T^{3\times 3}$ and are non-singular. Their projective fundamental
cells belong to $\P\R^3$, and are
represented (in $\R^2$) by orthogonal projection on the
plane orthogonal to the direction $(1,1,1)$.

Here is a general observation that is useful for the examples below. For $\bfu \in \R^n$, define the matrix $\diag(\bfu)\in \T^{\nxn}$ like in~(\ref{eq:diag}).
The fundamental cell of $\diag({\bf -u}) A  \diag({\bf
  u})$ is the translation of the
fundamental cell of $A$ by~${\bf -u}$.

\begin{example}
Consider the matrices:
\[
A_1 = \left[ \begin{array}{rrr} 0 & -2 & -2 \\ -2 & 0 & -2 \\ -2 & -2 &
    0 \end{array} \right]\:, \qquad A_2 = \left[ \begin{array}{rrr} 0 & -5 & -2 \\ 1 & 0 & 1 \\ -2 & -5 &
    0 \end{array} \right] \:,
\]
where $A_2=\diag({\bf -u})\odot A_1 \odot \diag({\bf
  u})$ for $\bfu= (0,-3,0)$. We have $\urk(A_1)=\urk(A_2)=3$.
For each matrix, the fundamental cell
is the interior of the  set of eigenvectors.
Here the
intersection of the fundamental cells is non-empty, see Figure
\ref{fi:exam1}. Applying Theorem \ref{th:geom}, we conclude that $\urk\langle A_1,A_2 \rangle = 3$.

\begin{figure}[ht]
\[ \epsfxsize=140pt \epsfbox{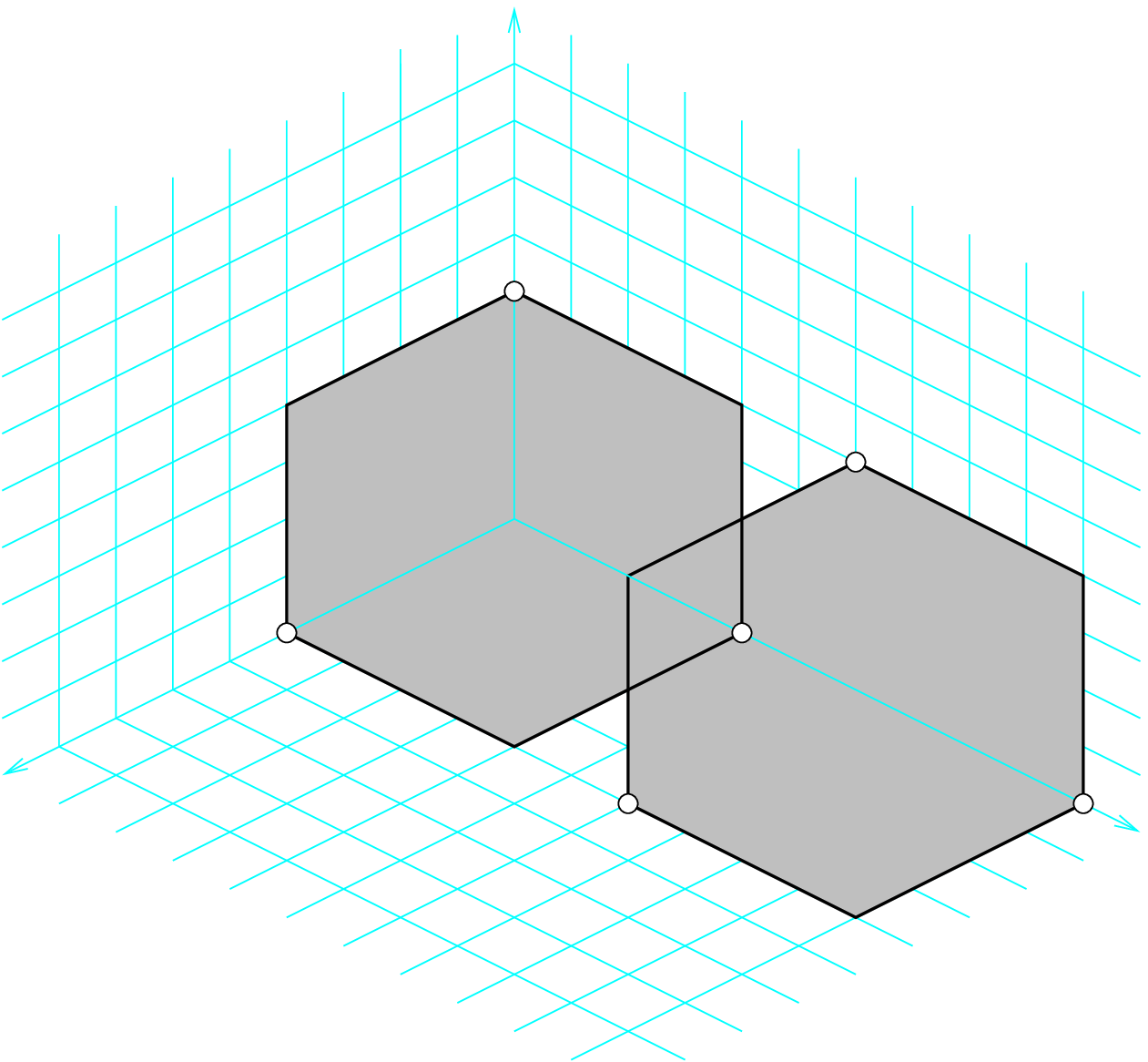} \]
 \caption{In gray, the fundamental cells of $A_1$ (left) and $A_2$ (right).}
 \label{fi:exam1}
 \end{figure}

We can also recover the result using Theorem \ref{Thm:Comb}. We have
\[
A_1 \vee A_2 = \left[ \begin{array}{rrr} 0 & -2 & -2 \\ 1 & 0 & 1 \\ -2 & -2 &
    0 \end{array} \right] \:,
\]
 for which $\rho(A_1 \vee A_2)=0$ and $\Gc(A_1 \vee A_2)=\Gc(A_1)=\Gc(A_2)$, so condition~$(C3)$ of  Theorem \ref{Thm:Comb} holds as well.
\end{example}

\begin{example}
Consider the matrices:
\[
B_1 = \left[ \begin{array}{rrr} 0 & -2 & -2 \\ -2 & 0 & -2 \\ -2 & -2 &
    0 \end{array} \right]\:, \qquad B_2 = \left[ \begin{array}{rrr} 0 & -5 & -1 \\ 1 & 0 & 2 \\ -3 & -6 &
    0 \end{array} \right] \: ,
\]
where $B_1=A_1$ and $B_2=\diag({\bf -v})\odot B_1 \odot \diag({\bf
  v})$ for $\bfv= (0,-3,1)$. Here again, we have
$\urk(B_1)=\urk(B_2)=3$.

\begin{figure}[ht]
\[ \epsfxsize=140pt \epsfbox{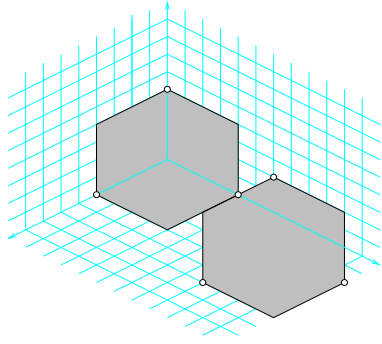} \]
 \caption{In gray, the fundamental cells of $B_1$ (left) and $B_2$ (right).}
 \label{fi:exam2}
 \end{figure}

For each matrix, the fundamental cell is again the interior of the set of eigenvectors.
But now the
intersection of the two fundamental cells is empty, see Figure~
\ref{fi:exam2}. Then $\urk\langle B_1,B_2 \rangle < 3$, by Theorem
\ref{th:geom}, and
one can check that indeed $\trrk(B_1B_2)=2$.

Moreover, we
can prove that $\urk\langle B_1,B_2 \rangle =2$. Indeed, the
intersection of the sets of eigenvectors has topological dimension 2, see Figure
\ref{fi:exam2}.
The common eigenvectors of $B_1$ and $B_2$ are also eigenvectors of
any matrix in $\langle B_1,B_2 \rangle$. Therefore, a matrix in
$\langle B_1,B_2 \rangle$ has a set of eigenvectors of topological
dimension at least 2, hence is not of rank 1.

We can confirm that $\urk\langle B_1,B_2 \rangle <3$ using Theorem~\ref{Thm:Comb}. Consider
\[
B_1 \vee B_2 = \left[ \begin{array}{rrr} 0 & -2 & -1 \\ 1 & 0 & 2 \\ -2 & -2 &
    0 \end{array} \right] \:
\]
for which  $\rho(B_1 \vee B_2)=0$. We have $(2, 3) \in
\Gc(B_1 \vee B_2), (B_2)_{23}= (B_1 \vee B_2)_{23}$, and  $( 2, 3) \not\in
\Gc(B_2)$, so condition~$(C3)$ of Theorem \ref{Thm:Comb} fails.
\end{example}

\begin{example}
Consider the matrices:
\[
C_1 = \left[ \begin{array}{rrr} -2 & 0 & -2 \\ -2 & -2 & 0 \\ 0 & -2 &
    -2 \end{array} \right]\:, \qquad C_2 = \left[ \begin{array}{rrr} -1 & 0 & -1 \\ -1 & -1 & 0 \\ 0 & -1 &
    -1 \end{array} \right] \:.
\]
Observe that $C_1\vee C_2 = C_2$. The conditions of Theorem
\ref{Thm:Comb} are clearly satisfied, implying that $\urk\langle
C_1,C_2\rangle =3$.

\begin{figure}[ht]
\[ \epsfxsize=120pt \epsfbox{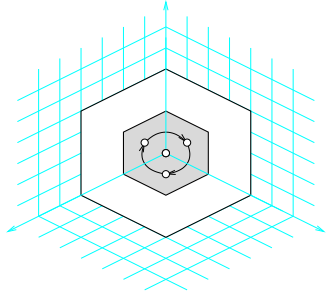} \]
 \caption{The fundamental cells of $C_1$ (white) and $C_2$ (gray).}
 \label{fi:exam3}
 \end{figure}

This can also be checked using Theorem
\ref{th:geom}. The fundamental cells of the two matrices have been
represented in Figure \ref{fi:exam3}. Each matrix acts as a rotation
of angle $-2\pi/3$ and center $(0,0,0)$ on its respective fundamental
cell. In particular, $(0,0,0)$ is the unique eigenvector of both $C_1$ and
$C_2$, and it belongs to the intersection of the fundamental cells.
\end{example}

\begin{example}
Consider the matrices:
\[
D_1 = \left[ \begin{array}{rrr} -2& 0 & -2 \\ -2 & -2 & 0 \\ 0 & -2 &
    -2 \end{array} \right]\:, \qquad D_2 = \left[ \begin{array}{rrr} -1 & 0.2 & -0.8 \\ -1.2 & -1 & 0 \\ -0.2 & -1 &
    -1 \end{array} \right] \:,
\]
where  $D_1=C_1$ and $D_2=\diag({\bf -w})\odot C_2 \odot \diag({\bf
  w})$ for ${\bf w}= (-0.2,0,0)$.
On their fundamental cell, the two matrices act as a rotation
of angle $-2\pi/3$ with respective centers $(0,0,0)$ and
$(0.2,0,0)$. Therefore, they have no common
eigenvector, and hence $\urk \langle D_1,D_2 \rangle <3$ by Theorem \ref{th:geom}.
Furthermore,
\[
D_1 \vee D_2 = \left[ \begin{array}{rrr} -1 & 0.2 & -0.8 \\ -1.2 & -1 & 0 \\ 0 & -1 &
    -1 \end{array} \right] \:,
\]
with $\rho(D_1 \vee D_2) = 0.2/3$, and applying Theorem
\ref{Thm:Comb}, we double-check that we have $\urk \langle D_1,D_2 \rangle <3$.

\begin{figure}[ht]
\[ \epsfxsize=120pt \epsfbox{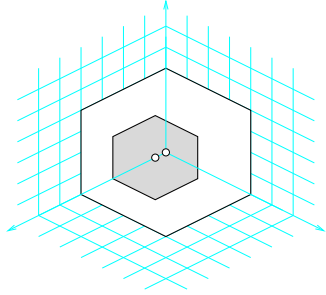} \]
 \caption{The fundamental cells of $D_1$ (white) and $D_2$ (gray).}
 \label{fi:exam4}
 \end{figure}

We check that $\urk(D_1D_2D_1)=1$ so we have $\urk \langle
D_1,D_2 \rangle =1$.
(This last result can also be recovered using \cite[Theorem
7.3.1]{mair95}.)
\end{example}

\subsection{Projectively bounded semigroups}

In obtaining Theorems~\ref{Thm:Comb} and~\ref{th:geom}, the assumption that the semigroup of matrices is finitely generated is used only twice. First, to prove Lemmas~\ref{pr:burnside} and \ref{pr:def}, second, to define matrix~$M$ in Theorem~\ref{Thm:Comb}.
So we obtain the following extension for free. 

\begin{definition}
A subset $\gen$ of non-null matrices in $\Tnn$ is {\em projectively bounded} if 
$\sup_{A\in \gen} \max_{A_{ij},A_{k\ell}\in\R}|A_{ij}-A_{k \ell}|<+\infty$. 
\end{definition}

Theorems~\ref{Thm:Comb} and~\ref{th:geom} hold if we replace in the statements: (i) ``finite set $\gen$" by "projectively bounded set $\gen$''; (ii) ``Set $M=\bigvee_{A\in \gen} \widetilde{A}$''  by 
``Set $M=\sup_{A\in \gen} \widetilde{A}$'' (for Theorem \ref{Thm:Comb}); (iii) ``$\urk(\SG)=n$'' by 
``$\forall P \in \SG, \urk(P)=n$''.

\section{{\bf Summary and open issues}}\label{sec:summery}

Next table summarizes the known results for the time-complexity of the
two basic questions concerning the rank.


\medskip

\begin{center}
\begin{tabular}{l|l|l}
& Deciding if maximal & Computing \\[2mm] \hline && \\
   $\trrk(A)$ & Polyn.~\cite{BuHe,butk94}  & NP-hard~\cite{KiRo05}
   \\[2mm] \hline &&  \\
   $\clrk$/$\rwrk (A)$ & Polyn.~\cite[\S3.4]{Butkovic} & Polyn.~\cite[\S3.4]{Butkovic}
   \\[2mm] \hline && \\
    $\urk(A)$ & Polyn., Cor. \ref{co:pol1} & Polyn., Cor. \ref{co:pol1}
    \\[2mm] \hline &&  \\
     $\urk(\SG)$ &  Polyn., Cor. \ref{co:pol2} & \qquad ?? \\ [2mm]
\end{tabular}
\end{center}

\medskip

Two points are worth emphasizing. First, computing the ultimate rank of a matrix is of
polynomial time-complexity, while  computing the initial rank of a
matrix could be  NP-hard. Second, the general question about the
decidability of computing
the ultimate rank of a matrix semigroup is still open.

\begin{problem}
How to compute the ultimate rank of a finitely generated matrix semigroup?
\end{problem}

In the present paper, we have proved that the problem of determining
whether the ultimate rank
is maximal is solvable in polynomial time. Another solvable case is that of
semigroups of ultimate rank $0$. Indeed, the case of rank $0$ is
equivalent to mortality, which is known to be decidable and NP-complete~\cite{BlTs97}.

Before approaching the general case of arbitrary ultimate rank, an
intermediate important case is that of ultimate rank $1$.
This case is generic~\cite{merl04}, more precisely, if two generators are
``chosen randomly'', the probability that the semigroup has ultimate
rank 1 is equal to 1.
Also some sufficient conditions for ultimate rank 1 are given in \cite[Chapter 7]{mair95}.
Yet, the decidability of the question ``Is the ultimate rank equal to
$1$?'' is still open.

\bibliographystyle{abbrv}

\end{document}